\newcommand{\inner}[1]{\left\langle #1 \right\rangle}
\newcommand{\norm}[1]{\left\Vert #1\right\Vert}
\newcommand{\bb}[1]{\mathbb{#1}}
\newcommand{\ca}[1]{\mathcal{#1}}
\newcommand{\Diag}[0]{\mathrm{Diag}}
\newcommand{\ff}{_{\mathrm{F}}}
\newcommand{\fs}{^2_{\mathrm{F}}}
\newcommand{\tp}{^\top}
\newcommand{\TX}{{\ca{T}_{\X}}}
\newcommand{\NX}{\ca{N}_{\X}}
\newcommand{\xk}{{x_{k} }}
\newcommand{\yk}{{y_{k} }}
\newcommand{\xkp}{{x_{k+1} }}
\newcommand{\A}{\ca{A}}
\newcommand{\Jc}{{\nabla c}}
\newcommand{\X}{{ \ca{X} }}
\newcommand{\K}{{ \ca{K} }}
\newcommand{\M}{{ \ca{M} }}
\newcommand{\Z}{{\ca{X}_{\tilde{\rho}}}}
\newcommand{\W}{{\ca{X}_1}}
\newcommand{\Y}{{\ca{K}_\rho}}
\newcommand{\Rn}{\mathbb{R}^n}
\newcommand{\Rp}{\mathbb{R}^p}
\newcommand{\Rm}{\mathbb{R}^m}
\newcommand{\IX}{\mathbb{I}_{\mathcal{X}}}
\newtheorem{theo}{Theorem}[section]
\newtheorem{lem}[theo]{Lemma}
\newtheorem{prop}[theo]{Proposition}
\newtheorem{defin}[theo]{Definition}
\newtheorem{assumpt}[theo]{Assumption}
\newcommand{\comm}[1]{{\color{red}#1}}
\numberwithin{equation}{section}
\def\psimu{\psi_\mu}
\def\Tmu{T_\mu}
\title{Partial Envelope for Optimization Problem with Nonconvex Constraints}
\author{Xiaoyin Hu, ~Xin Liu, ~Kim-Chuan Toh, ~Nachuan Xiao}
\begin{document}
\maketitle
	
\begin{abstract}
    In this paper, we consider the nonlinear constrained optimization problem (NCP) with constraint set $\{x \in \mathcal{X}: c(x) = 0\}$, where $\mathcal{X}$ is a closed convex subset of $\mathbb{R}^n$. Building upon the forward-backward envelope framework for optimization over $\mathcal{X}$, we propose a forward-backward semi-envelope (FBSE) approach for solving (NCP). In the proposed semi-envelope approach, we eliminate the constraint $x \in \mathcal{X}$ through a specifically designed envelope scheme while preserving the constraint $x \in \mathcal{M} := \{x \in \mathbb{R}^n: c(x) = 0\}$. We establish that the forward-backward semi-envelope for (NCP) is well-defined and locally Lipschitz smooth over a neighborhood of $\mathcal{M}$. Furthermore, we prove that (NCP) and its corresponding forward-backward semi-envelope have the same first-order stationary points within a neighborhood of $\mathcal{X} \cap \mathcal{M}$. Consequently, our proposed forward-backward semi-envelope approach enables direct application of optimization methods over $\mathcal{M}$ while inheriting their convergence properties for (NCP). Additionally, we develop an inexact projected gradient descent method for minimizing the forward-backward semi-envelope over $\mathcal{M}$ and establish its global convergence. Preliminary numerical experiments demonstrate the practical efficiency and potential of our proposed approach.
\end{abstract}

\section{Introduction}
In this paper, we consider the following constrained optimization problem,
\begin{equation}
    \tag{NCP}
    \label{Prob_Ori}
        \begin{aligned}
            \min_{x \in \Rn} \quad &f(x) + \IX(x)\\
            \text{s. t.} \quad & x \in \M := \{x \in \Rn: c(x) = 0\},
        \end{aligned}
\end{equation}
where $\X$ is a compact convex subset of $\Rn$ with easy-to-compute projection mapping. Moreover, $\IX(x)$ denotes the indicator function of the set $\X$, defined by $\IX(x) = 0$ for any $x \in \X$ and $\IX(x) = +\infty$ otherwise. Therefore,  \eqref{Prob_Ori} can be viewed as the optimization problem that minimizes $f(x)$ over $\{x \in \X: c(x) = 0\}$. Let $\mathrm{lin}(\ca{C})$ be the largest subspace of $\Rn$ that is contained in $\ca{C}$ for any closed convex cone $\mathcal{C}$ (i.e.,  $\mathrm{lin}(\ca{C}) = \ca{C} \cap -\ca{C}$), we make the following assumptions on \eqref{Prob_Ori}. 
    \begin{assumpt}
        \label{Assumption_f}
        \begin{enumerate}
            \item The objective function $f: \Rn \to \bb{R}$ is twice-differentiable over $\Rn$. 
            \item The constraint mapping $c: \Rn \to \bb{R}^p$ is twice-differentiable and has locally Lipschitz continuous second-order derivatives over $\Rn$.
            \item For any given $x \in \K := \X \cap \M$, it holds that 
            \begin{equation}
                 \nabla c(x)\tp \mathrm{lin}(\TX(x)) = \Rp.
            \end{equation} 
            That is, the constraint nondegeneracy condition \cite{robinson1980strongly} holds over $\X \cap \M$. 
        \end{enumerate}
    \end{assumpt}

    Optimization problems that take the form of \eqref{Prob_Ori} cover various important optimization models, including optimization with equality and inequality constraints \cite{jorge2006numerical}, conic programming \cite{tang2024feasible,liang2021inexact,hou2025low,xiao2025exact}, and manifold optimization \cite{zass2006nonnegative,absil2008optimization,hu2022constraint}. Consequently, problems of the form \eqref{Prob_Ori} have wide applications in various areas, such as learning tasks \cite{barlow1989unsupervised,xiao2025cdopt}, signal processing \cite{candy1986signal,luo2010semidefinite}, mechanical design \cite{venkayya1978structural}, etc. 

    The envelope approaches play a crucial role in analyzing and solving optimization problems with composite structures. Consider the following optimization problem with an indicator term,
    \begin{equation}
    \label{Prob_Simple}
    \min_{x \in \Rn} \quad h(x) + \IX(x),
    \end{equation}
    where $h$ is a Lipschitz continuous function over $\Rn$. To address the nonsmoothness and discontinuity introduced by the term $\IX(x)$, envelope approaches have been developed to transform \eqref{Prob_Simple} into an unconstrained minimization of the corresponding envelope function, which is continuously differentiable over $\Rn$.
    
    For example, inspired by the forward-backward splitting \cite{chen1997convergence,attouch2013convergence,raguet2013generalized}, the forward-backward envelope \cite{patrinos2013proximal,liu2017further,stella2017forward} is developed as a continuous differentiable surrogate for $h(x) + \IX(x)$. By assuming the continuous differentiability of $h$, the formulation of forward-backward envelope can be expressed as 
    \begin{equation}
        e_{F, \mu}(x) := \min_{w \in \X} h(x) + \inner{ \nabla h(x), w-x} + \frac{1}{2\mu}\norm{w-x}^2. 
    \end{equation}
    Here $\mu > 0$ refers to the envelope parameter. Then for any sufficiently small $\mu$, any $x \in \X$ satisfying $0 \in \nabla e_{F,\mu}(x)$ is also a first-order stationary point of \eqref{Prob_Simple}.
    More detailed properties of $e_{F, \mu}$ are discussed in \cite{stella2017forward,themelis2018forward,ahookhosh2021bregman}, including its Lipschitz continuity, differentiability, Lipschitz continuity of its gradients, and second-order properties. Furthermore, the forward-backward envelope enables the development of various efficient methods for solving \eqref{Prob_Simple}, including proximal gradient methods \cite{themelis2018forward}, quasi-Newton methods \cite{stella2017forward}, semi-smooth Newton methods \cite{wu2025globalized}, and penalty functions \cite{hu2024minimization}.
    
    Additionally, when $h$ is weakly convex and possibly nonsmooth, the Moreau envelope \cite{moreau1965proximite} can be employed as a smooth surrogate for  \eqref{Prob_Simple},
    \begin{equation}
        e_{M,\mu}(x) := \min_{w \in \X} h(w) + \frac{1}{2\mu} \norm{w-x}^2,
    \end{equation}
    where $\mu > 0$ refers to the envelope parameter. 
    For any sufficiently small $\mu$, \eqref{Prob_Simple} and $e_{M,\mu}$ share the same first-order stationary points over $\Rn$. However, the computation of Moreau envelope is typically intractable in practice since its corresponding subproblems usually do not admit closed-form solutions. Despite this, the Moreau envelope has been widely used in the theoretical analysis to minimize weakly convex function. Based on the concept of Moreau envelope, a wide range of optimization methods are analyzed, such as the stochastic subgradient method \cite{davis2019stochastic}, proximal point methods \cite{davis2022escaping}, Lagrangian-based methods \cite{zeng2022moreau}, bilevel optimization \cite{liu2024moreau}, minimax optimization \cite{grimmer2023landscape}, etc.

    However, all these existing envelope approaches for \eqref{Prob_Simple} rely on the convexity of $\X$.  Although \eqref{Prob_Ori} can be viewed as an unconstrained optimization problem with the indicator term $\bb{I}_{\M \cap \X}$, the nonconvexity of $\M \cap \X$ leads to the nonsmoothness of $e_{F, \mu}$ and $e_{M, \mu}$. Furthermore, the efficient computation of the gradients of the envelope functions for \eqref{Prob_Simple} relies on the efficient computation of the projection to $\X$. However, when treating \eqref{Prob_Ori} as an unconstrained optimization problem with the indicator term $\bb{I}_{\M \cap \X}$, the projection onto $\X \cap \M$ can be computationally expensive, even if both $\Pi_{\M}$ and $\Pi_{\X}$ are easy to compute. To the best of our knowledge, there is currently no existing work that directly develops envelope approaches for \eqref{Prob_Ori} by treating the constraint $x \in \M \cap \X$ as the indicator term $\bb{I}_{\M \cap \X}$.

    Very recently, \cite{xiao2025exact} proposes an exact penalty function for \eqref{Prob_Ori} that is formulated as follows,
    \begin{equation}
        \label{Prob_CDP}
        \min_{x \in \X} \quad  h_{\rm cdf}(x) := f(\A(x)) + \frac{\beta}{2}\norm{c(x)}^2. 
    \end{equation}
    Here $\A: \X \to \bb{R}$ is a specifically designed constraint dissolving mapping, which is locally Lipschitz smooth over $\X$. As demonstrated in \cite{xiao2025exact}, the construction of $\A$ is independent of the choices of $f$, and can be constructed using only $\Jc(x)$ and the {\it projective mapping} associated to $\X$. We call mapping $Q: \Rn \to \bb{S}^{n\times n}_+$ a projective mapping if it satisfies the following conditions.  
    \begin{assumpt}
        \label{Assumption_Q}
        \begin{enumerate}
            \item $Q: \Rn \to \bb{S}^{n\times n}_+$ is locally Lipschitz smooth over $\Rn$,  where $\bb{S}^{n\times n}_+$ is  the set of $n\times n$ symmetric positive semi-definite matrices.
            \item For any $x \in \X$, it holds that $\mathrm{null}(Q(x)) = \mathrm{range}(\NX(x))$. 
        \end{enumerate}
    \end{assumpt}
    Then based on the concept of the projective mapping, the constraint dissolving mapping $\A: \Rn \to \Rn$ for \eqref{Prob_Ori} can be expressed as,
    \begin{equation}
        \label{Eq_constraint_dissolving_mapping}
        \A(x) = x - Q(x)\Jc(x) (\Jc(x)\tp Q(x) \Jc(x) + \tau(x) I_p )^{-1}c(x),
    \end{equation} 
    where $\tau(x)$ is some scalar function that will be specified later.

    As shown in \cite{xiao2025exact}, the constraint dissolving approach decouples the constraints $c(x) = 0$ and $x \in \X$ through exact penalization. Although we can further smooth \eqref{Prob_CDP} by the Moreau envelope or forward-backward envelope, these approaches rely on the choices of the penalty parameter, which could be challenging to choose in practice. Therefore, we are led to ask the following question:
    \begin{quote}
        Can we develop envelope approaches for constrained optimization problems in the form of \eqref{Prob_Ori}, without introducing any penalty parameters?
    \end{quote}

    In this paper, we consider an alternative envelop approach for \eqref{Prob_Ori} without introducing any penalty parameter. Based on the concept of the projective mapping, we introduce the following forward-backward semi-envelope (FBSE) for \eqref{Prob_Ori},
    \begin{equation}
        \tag{FBSE}
        \label{Prob_FBE}
        \begin{aligned}
            \min_{x \in \Rn}\quad &\psimu(x)\\
            \text{s. t.} \quad & x \in \M,
        \end{aligned}
    \end{equation}
    where $\psimu(x) := \min_{w \in \X} ~ f(x) + \inner{J(x) \nabla f(x), w-x} + \frac{1}{2\mu}\norm{w-x}^2$. 
    Moreover,  $\mu > 0$ is the envelope parameter, and the mapping 
    $J: \Rn \mapsto \Rn$ is defined by,
    \begin{equation}
        J(x) := I_n - \Jc(x) (\Jc(x)\tp Q(x) \Jc(x) + \tau(x) I_p )^{-1}\Jc(x)\tp Q(x).
    \end{equation}
    Here the mapping $J(x)$ is an approximation to the Jacobian of $\A$ at $x$, while $\tau(x) := L_{\tau}(\norm{c(x)}^2 + \mathrm{dist}(x, \X)^2)$ with a prefixed parameter $L_{\tau} > 0$. Moreover, for any $x \in \Rn$, we denote $\Pi_{\X}(x)$ as the projection onto $\X$ and 
    \begin{equation}
        \label{Eq_Def_TF}
        \Tmu(x) := \mathop{\arg\min}_{w \in \X} ~ f(x) + \inner{J(x) \nabla f(x), w-x} + \frac{1}{2\mu}\norm{w-x}^2
        \;=\; \Pi_{\X}\big(x-\mu J(x) \nabla f(x)\big). 
    \end{equation}
    From the convexity of $\X$, for any $x \in \X$, $\Tmu(x)$ is a singleton and satisfies the following optimality condition,
    \begin{eqnarray}
    \label{Eq_optim_Tmu}
      0 \in \frac{1}{\mu}(\Tmu(x)-x)+ J(x)\nabla f(x) + {\ca N}_\X(\Tmu(x)).
    \end{eqnarray}
    
    
    We name the envelope $\psimu(\cdot)$ in \eqref{Prob_FBE} as the forward-backward semi-envelope for \eqref{Prob_Ori}, as it waives the nonsmooth term $\IX(x)$ from \eqref{Prob_Ori} while keeping the constraint $x \in \M$ unchanged. By assuming the local Lipschitz smoothness of $f$ in Assumption \ref{Assumption_f}(1), we prove that $\psimu(x)$ is continuously differentiable over $\Rn$. Moreover, there exists $\bar{\mu} > 0$ such that \eqref{Prob_Ori} and \eqref{Prob_FBE} share the same first-order stationary point over a neighborhood of $\X$ for any $\mu \in (0, \bar{\mu}]$. 

    We prove that there exists a neighborhood $\Y$ of $\X\cap \M$ such that the Jacobian $\nabla c(x)$ is full-rank over $\Y $, hence the constraints $c(x) =0$ satisfy the linear independence constraint qualification (LICQ) over $ \Y $.
    The equivalence between \eqref{Prob_Ori} and \eqref{Prob_FBE} then enables the direct implementation of various optimization approaches for solving the equality constrained optimization \eqref{Prob_FBE}, such as the sequential quadratic programming (SQP) method \cite{berahas2021sequential}, the augmented Lagrangian method \cite{xie2021complexity}, etc. Furthermore, we develop a projected inexact gradient descent method for solving \eqref{Prob_FBE}, where we employ $\frac{1}{\mu}(x - \Tmu(x))$ as an inexact evaluation of $\nabla \psimu$. Although the generic projected inexact gradient method typically lacks convergence guarantees, we prove that our proposed method has $\ca{O}(\varepsilon^{-2})$ iteration complexity. 

    Additionally, it is worth mentioning that we can consider the Moreau semi-envelope for \eqref{Prob_Ori} based on similar techniques as the forward-backward sem-envelope, which can be formulated as $\psi_{M, \mu}(x) := \mathop{\arg\min}_{y \in \X} ~f(y) + \frac{1}{2\mu}\norm{y-x}^2$. However, as the subproblem of the Moreau semi-envelope does not admit a closed-form solution and is generally intractable in practice, we omit the discussion of the Moreau semi-envelope $\psi_{M, \mu}(x)$ to  focus our paper on the more practical forward-backward semi-envelope $\psimu$ for \eqref{Prob_Ori}.

    The outline of the rest of this paper is as follows. In Section 2, we present the notations and preliminary concepts that are necessary for the proofs in this paper. In Section 3, we prove the equivalence between \eqref{Prob_Ori} and \eqref{Prob_FBE}, and present the inexact projected gradient method. Preliminary numerical experiments are given in Section 4 to demonstrate the efficiency of our proposed inexact projected gradient method. We conclude the paper in the last section.

\section{Preliminary}

\subsection{Notation}
For any matrix $A\in\bb{R}^{n\times p}$, 
let $\mathrm{range}(A)$ be the subspace spanned by the column vectors of $A$, $\mathrm{null}(A)$ be the null space of $A$ (i.e., $\mathrm{null}(A) = \{d \in \Rp: A d = 0\}$), 
and $\norm{\cdot}$ denotes the $\ell_2$-norm of a vector or an operator. 
For a subset $\ca{C} \subseteq \Rn$, $\mathrm{range}(\ca{C})$ refers to the smallest subspace of $\Rn$ that contains $\ca{C}$,  $\mathrm{aff}(\ca{C})$ refers to the affine hull of $\ca{C}$, and $\mathrm{ri}(\ca{C})$ refers to the relative interior of $\ca{C}$. Moreover, when $\ca{C}$ is a subspace of $\Rn$, $\ca{C}^{\perp}$ is defined as the largest subspace that is orthogonal to $\ca{C}$. Additionally, for any $w \in \Rn$, we use the notation $\inner{w,\ca{C}} := \{ \inner{w,d} : d\in \ca{C}\}$ and $w^{\perp} := \{w\}^{\perp}$. And we denote $\ca{B}_{\rho}(y)$ as $\left\{ x \in \mathbb{R}^n \mid \| y - x \| < \rho \right\}$.

The notations $\mathrm{diag}(A)$ and $\Diag(x)$
stand for the vector formed by the diagonal entries of a matrix $A$,
and the diagonal matrix with the entries of $x\in\bb{R}^n$ as its diagonal, respectively. 
We denote the $r$-th largest singular value of a matrix $A\in \bb{R}^{n\times p}$ by $\sigma_r(A)$, while $\sigma_{\min}(A)$ refers to the smallest singular value of 
$A$. Furthermore, 
the pseudo-inverse of $A$ is denoted by $A^\dagger \in \bb{R}^{p\times n}$, which satisfies $AA^\dagger A = A$, $A^\dagger AA^\dagger = A^\dagger$, and both $A^{\dagger} A$ and $A A^{\dagger}$ are symmetric.

For any closed subset $\ca{C} \subseteq \Rn$ and any $x \in \bb{R}^n$, $\mathrm{range}(\ca{C})$ refers to the smallest subspace of $\Rn$ that contains $\ca{C}$. Moreover, we define the projection from $x \in \bb{R}^n$ to $\ca{C}$ as 
\begin{equation*}
	\Pi_{\ca{C}}(x) := \mathop{\arg\min}_{y \in \ca{C}} ~ \norm{x-y}. 
\end{equation*}  
Furthermore, $\mathrm{dist}(x, \ca{C})$ refers to the distance between $x$ and $\ca{C}$, i.e. $ \mathrm{dist}(x, \ca{C}) = \norm{x - \Pi_{\ca{C}}(x)}$.

For any $m \geq 1$ and any differentiable mapping $T: \Rn \to \Rm$, its (transposed) Jacobian is denoted as  $\nabla T(x) \in \bb{R}^{n\times m}$. More precisely, let $T_i$ be the $i$-th coordinate of the mapping $T$, then $\nabla T(x)$ can be expressed as 
\begin{equation*}
    \nabla T(x) = \Big[\nabla T_1(x), \ldots, \nabla T_p(x)\Big] \in \bb{R}^{n\times m}. 
\end{equation*}
In particular, $\nabla c(x) \in \bb{R}^{n\times p}$, and for any
$x\in\M$, the tangent space of $\M$ at $x$ is given by 
${\rm null}(\nabla c(x)^\top)$, and the normal space of $\M$ at $x$ can be expressed as $\mathrm{range}(\nabla c(x))$.  


\subsection{Stationarity}
\label{Subsection_Stationarity}
In this subsection, we introduce the definitions of the stationary points for the constrained optimization problem \eqref{Prob_Ori} and the optimization problem over $\X \cap \M$. We begin with the first-order optimality condition of \eqref{Prob_Ori}, which is defined as follows.
\begin{defin}[\cite{clarke1990optimization}]\label{Defin_FOSP}
	Given $x \in \X \cap \M$, we say that $x$ is a first-order stationary point of \eqref{Prob_Ori} if
	\begin{equation*}
            0 \in \nabla f(x) +  \mathrm{range}(\Jc(x)) + \NX(x).
	\end{equation*}
    Moreover, for any given $\varepsilon > 0$, we say that $x \in \X \cap \M$ is an $\varepsilon$-first-order stationary point of \eqref{Prob_Ori} if 
    \begin{equation*}
        \mathrm{dist}\left( 0, \nabla f(x) + \mathrm{range}(\Jc(x)) + \NX(x) \right) \leq \varepsilon, \quad \text{and} \quad \norm{c(x)} \leq \varepsilon. 
    \end{equation*}
\end{defin}
Next, we give the definitions of the first-order stationary point and the $\varepsilon$-first-order stationary point of the following optimization problem
\begin{equation}
    \label{Prob_EqCon}
    \begin{aligned}
        \min_{x \in \Rn} \quad &h(x)\\
        \text{s. t.} \quad & x \in \M = \{x \in \Rn: c(x) = 0\}.
    \end{aligned}
\end{equation}

\begin{defin}
    Given $x \in \M$, we say that $x$ is a first-order stationary point of \eqref{Prob_EqCon} if
    \begin{equation*}
        0\in \nabla h(x) + \mathrm{range}(\Jc(x)) .
    \end{equation*}
    Moreover, for any given $\varepsilon > 0$, we say that $x \in \M$ is an $\varepsilon$-first-order stationary point of \eqref{Prob_EqCon} if
    \begin{equation*}
        \mathrm{dist}\left( 0, \nabla h(x) + \mathrm{range}(\Jc(x))  \right) \leq \varepsilon. 
    \end{equation*}
\end{defin}

\section{Forward-backward Semi-Envelope for \eqref{Prob_Ori}}
In this section, we aim to analyze the forward-backward envelop for \eqref{Prob_Ori}. This section is structured as follows. Section \ref{Subsection_constants} introduces the basic concepts and constants that are essential for the theoretical analysis. Section \ref{Subsection_basic_property} establishes the equivalence between \eqref{Prob_Ori} and \eqref{Prob_FBE} in a neighborhood of $\X$. Finally, Section \ref{Subsection_algorithm} presents a globally convergent projected inexact gradient method and its convergence properties.

\subsection{Basic constants}
\label{Subsection_constants}
In this subsection, we first introduce some necessary constants in our theoretical analysis under Assumption \ref{Assumption_f} and Assumption \ref{Assumption_Q} in Table \ref{Simbol_Definition}. 
\begin{table}[h!]
\centering
\begin{tabular}{c p{0.75\textwidth}}
\toprule
\textbf{Notation} & \textbf{Definition} \\
\midrule
$\sigma_c$ & $\inf_{x \in \X\cap \M}\sigma_{\min}(\Jc(x))$ 
\\[2pt]
$\sigma_{Q}$ & $\inf_{x \in \X\cap \M}\sigma_{\min}(\Jc(x)\tp Q(x) \Jc(x))$ 
\\[2pt]
$\W$ &  $\{x \in \Rn: \mathrm{dist}(x, \X) \leq 1\}$ 
\\[2pt]
$M_{f}$ & $\sup_{x \in \W} \norm{\nabla f(x)}$ 
\\[2pt]
$L_{f}$ & $\sup_{x, y\in \W, x \neq y} \frac{\norm{\nabla f(x) - \nabla f(y)}}{\norm{x-y}}$ 
\\[5pt]
$M_{Q}$ & $\sup_{x \in \W} \norm{Q(x)}$ 
\\[5pt]
$L_{Q}$ & $\sup_{x, y \in \W, ~ x\neq y} \frac{\norm{Q(x) - Q(y)}}{\norm{x-y}}$ 
\\[5pt]
$L_{D}$ & $\sup_{x, y \in \W, ~ x\neq y} \frac{\norm{\Jc(x)\tp Q(x) \Jc(x) - \Jc(y)\tp Q(y) \Jc(y)}}{\norm{x-y}}$ 
\\[5pt]
$M_c$ & $\sup_{x \in \W} \norm{\nabla c(x)}$
\\[5pt]
$L_c$ & $\sup_{x, y\in \W, x \neq y} \frac{\norm{\nabla c(x) - \nabla c(y)}}{\norm{x-y}}$
\\[5pt]
$\sigma_{\mathrm{res}}$ & $\inf \{\norm{c(x)}: x \in \W, ~\mathrm{dist}(x, \M) \geq \frac{\sigma_Q}{2L_D} \}$
\\[2pt]
$\tilde{\rho}$ & $\min\left\{1, \sigma_{\mathrm{res}}^2\right\}$
\\[8pt]
$\Z$ & $\{x \in \Rn: \mathrm{dist}(x, \X) \leq \tilde{\rho}\}$ \\
\bottomrule
\end{tabular}
\caption{The definition of some necessary constants in theoretical analysis.}
    \label{Simbol_Definition}
\end{table}

It is worth mentioning that Assumption \ref{Assumption_f}(3) guarantees that $\sigma_c > 0$ and $\sigma_{Q} > 0$. Additionally, it follows from the compactness of $\X$ that $M_Q$, $L_Q$ and $L_D$ are finite.

Next, we present the following lemma illustrating the well-posedness of $J(x)$ and $\A(x)$. 
\begin{lem}
    \label{Le_aux_1}
    Suppose Assumption \ref{Assumption_f} and Assumption \ref{Assumption_Q} hold. Then for any $x \in \Rn$, it holds that 
    \begin{equation}
        \Jc(x)\tp Q(x) \Jc(x) + \tau(x) I_p \succeq \min\left\{\frac{\sigma_{Q}}{2},  L_{\tau} \tilde{\rho}\right\} I_p. 
    \end{equation}
\end{lem}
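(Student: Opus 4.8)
The plan is to peel a scalar inequality off the matrix inequality using $Q(x)\succeq 0$, and then split $\Rn$ according to the size of the regularization term $\tau(x)$. Since $Q(x)\succeq 0$ gives $\Jc(x)\tp Q(x)\Jc(x)\succeq 0$, for every $x\in\Rn$ we have
\begin{equation*}
    \Jc(x)\tp Q(x)\Jc(x)+\tau(x)I_p \;\succeq\; \Big(\max\big\{\,\tau(x),\ \sigma_{\min}\big(\Jc(x)\tp Q(x)\Jc(x)\big)\,\big\}\Big)I_p .
\end{equation*}
Hence either of the two bounds $\tau(x)\ge L_{\tau}\tilde{\rho}$ or $\sigma_{\min}\big(\Jc(x)\tp Q(x)\Jc(x)\big)\ge \sigma_{Q}/2$ already yields the conclusion, and it suffices to show that at least one of them holds for every $x\in\Rn$.

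So assume $\tau(x)<L_{\tau}\tilde{\rho}$; we must then derive $\sigma_{\min}\big(\Jc(x)\tp Q(x)\Jc(x)\big)\ge \sigma_{Q}/2$. Since $\tau(x)=L_{\tau}\big(\norm{c(x)}^{2}+\mathrm{dist}(x,\X)^{2}\big)$ and $\tilde{\rho}=\min\{1,\sigma_{\mathrm{res}}^{2}\}\le 1$, the assumption forces $\mathrm{dist}(x,\X)<1$, so $x\in\W$, and $\norm{c(x)}^{2}<\tilde{\rho}\le\sigma_{\mathrm{res}}^{2}$, so $\norm{c(x)}<\sigma_{\mathrm{res}}$. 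By the definition of $\sigma_{\mathrm{res}}$ as the infimum of $\norm{c(\cdot)}$ over $\{z\in\W:\mathrm{dist}(z,\M)\ge\sigma_{Q}/(2L_{D})\}$, the point $x$ cannot belong to that set, hence $\mathrm{dist}(x,\M)<\sigma_{Q}/(2L_{D})$. It remains to upgrade "$x\in\W$ and $\mathrm{dist}(x,\M)<\sigma_{Q}/(2L_{D})$" into the desired conditioning bound.

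For the upgrade I would use that $\W$ is compact (as $\X$ is), that $z\mapsto\Jc(z)\tp Q(z)\Jc(z)$ is $L_{D}$-Lipschitz on $\W$ by the definition of $L_{D}$, and that $\sigma_{\min}(\cdot)$ is $1$-Lipschitz in the operator norm (Weyl's inequality): if $\bar{x}\in\X\cap\M$ satisfies $\norm{x-\bar{x}}<\sigma_{Q}/(2L_{D})$, then
\begin{equation*}
    \sigma_{\min}\big(\Jc(x)\tp Q(x)\Jc(x)\big)\;>\;\sigma_{\min}\big(\Jc(\bar{x})\tp Q(\bar{x})\Jc(\bar{x})\big)-\tfrac{\sigma_{Q}}{2}\;\ge\;\sigma_{Q}-\tfrac{\sigma_{Q}}{2}\;=\;\tfrac{\sigma_{Q}}{2},
\end{equation*}
the last step using the definition of $\sigma_{Q}$. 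The step I expect to be the main obstacle is producing such a $\bar{x}\in\X\cap\M$: the deductions above only give a point of $\M$ — namely $\Pi_{\M}(x)$ — within $\sigma_{Q}/(2L_{D})$ of $x$, and the metric projection of $x$ onto $\M$ need not lie in $\X$. Bridging this is precisely what the constraint nondegeneracy of Assumption \ref{Assumption_f}(3) should provide: transversality of $\X$ and $\M$ along $\X\cap\M$ yields a local error bound $\mathrm{dist}(x,\X\cap\M)\le\kappa\big(\mathrm{dist}(x,\X)+\mathrm{dist}(x,\M)\big)$ valid on a neighborhood of $\X\cap\M$, and the quantitative choice of $\tilde{\rho}$ through $\sigma_{\mathrm{res}}$ — which records how $\norm{c(\cdot)}$ bounds $\mathrm{dist}(\cdot,\M)$ from below on the compact set $\W$ — is calibrated so that, in the regime $\tau(x)<L_{\tau}\tilde{\rho}$, the point $x$ sits inside that neighborhood and within distance $\sigma_{Q}/(2L_{D})$ of $\X\cap\M$. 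Once $\bar{x}$ is in hand the displayed estimate finishes the proof, while the complementary case $\tau(x)\ge L_{\tau}\tilde{\rho}$ is immediate from the first display.
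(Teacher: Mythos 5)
Your overall strategy coincides with the paper's: the ``upgrade'' estimate you display is exactly the paper's inequality (3.2), obtained by comparing $\Jc(x)\tp Q(x)\Jc(x)$ with its value at a point $y\in\X\cap\M$ at distance at most $\sigma_Q/(2L_D)$, using the $L_D$-Lipschitz bound and $\sigma_{\min}(\Jc(y)\tp Q(y)\Jc(y))\geq\sigma_Q$; and the first half of your reduction is sound ($\tau(x)<L_\tau\tilde{\rho}$ forces $x\in\W$, $\norm{c(x)}<\sigma_{\mathrm{res}}$, hence $\mathrm{dist}(x,\M)<\sigma_Q/(2L_D)$ by the definition of $\sigma_{\mathrm{res}}$).

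The genuine gap is precisely the bridging step you flag, and the bridge you propose does not hold. You need a point of $\X\cap\M$ (not merely of $\M$) within $\sigma_Q/(2L_D)$ of $x$, since $\sigma_Q$ is only a lower bound over $\X\cap\M$. But closeness to $\X$ together with closeness to $\M$ does not imply closeness to $\X\cap\M$: the level set $\M$ can re-approach $\X$ far away from the intersection (a ``near miss''), and at such a point all of your hypotheses hold ($x\in\W$, $\norm{c(x)}<\sigma_{\mathrm{res}}$, $\mathrm{dist}(x,\X)$ small) while $\mathrm{dist}(x,\X\cap\M)$ is on the order of the diameter of $\X$. The transversality error bound $\mathrm{dist}(x,\X\cap\M)\leq\kappa\big(\mathrm{dist}(x,\X)+\mathrm{dist}(x,\M)\big)$ that you invoke is valid only on a neighborhood of $\X\cap\M$, so using it to conclude that $x$ lies near $\X\cap\M$ is circular; moreover, no constant in Tables 1--2 encodes such a modulus $\kappa$, so the claimed ``calibration'' of $\tilde{\rho}$ and $\sigma_{\mathrm{res}}$ is simply not present ($\sigma_{\mathrm{res}}$ only relates $\norm{c(\cdot)}$ to $\mathrm{dist}(\cdot,\M)$, not to $\mathrm{dist}(\cdot,\X\cap\M)$). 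The paper does not try to show your remaining case is empty: it splits directly on whether $\mathrm{dist}(x,\X\cap\M)$ exceeds $\sigma_Q/(2L_D)$, applies the Lipschitz/Weyl bound only in the near case, and in the far case falls back on the regularization term $\tau(x)I_p$ via $\sigma_{\mathrm{res}}$. (To be fair, the paper's far-case estimate also implicitly conflates ``far from $\X\cap\M$'' with ``far from $\M$'', so the near-miss geometry is delicate there too; but as written, your argument leaves this step unproved, and it cannot be derived from the stated definitions alone.)
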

\begin{proof}
    As demonstrated in  Assumption \ref{Assumption_Q}, $Q(x) \succeq 0$ holds for any $x \in \Rn$. As a result, it holds that $\Jc(x)\tp Q(x) \Jc(x) \succeq 0$ holds for any $x \in \Rn$. Notice that for any $x \in \X \cap \M$, the constraint nondegeneracy condition in Assumption \ref{Assumption_f}(3) guarantees that $\Jc(x)\tp Q(x) \Jc(x) \succ 0$.  Furthermore, from the continuity of $Q(x)$ and $\Jc(x)$ in Assumption \ref{Assumption_f} and Assumption \ref{Assumption_Q}, it holds that the mapping $x \mapsto \Jc(x)\tp Q(x) \Jc(x)$ is continuous over $\Rn$. As a result, from the definition of $L_D$ and that $\tilde{\rho}\leq 1$, 
    it holds that for any 
    $x \in \{x \in \Z: 
    \mathrm{dist}(x, \X \cap \M) \leq \frac{\sigma_Q}{2L_D}\}$, we have for any $y \in \X \cap \M$,
    \begin{equation}
       \label{eq-3.2}
        \sigma_{\min}(\Jc(x)\tp Q(x) \Jc(x)) \geq \sigma_{\min}(\Jc(y)\tp Q(y) \Jc(y)) - L_D \norm{y-x} \geq \sigma_{Q} - L_D \cdot \frac{\sigma_Q}{2L_D} \geq \frac{\sigma_{Q}}{2}. 
    \end{equation}
    As a result, we can conclude that $\Jc(x)\tp Q(x) \Jc(x) + L_{\tau} \norm{c(x)}^2 I_p \succ 0$ holds for any $x \in \{x \in \Z: \mathrm{dist}(x, \X \cap \M) \leq \frac{\sigma_Q}{2L_D}\}$.

    Furthermore, for any $x \in \{x \in \Z: 
    \mathrm{dist}(x, \X \cap \M) > \frac{\sigma_Q}{2L_D}\}$, 
    from the fact that $\tilde{\rho} \leq \sigma_{\mathrm{res}}^2$, we have 
    \begin{equation*}
        \begin{aligned}
            \sigma_{\min}(\Jc(x)\tp Q(x) \Jc(x) + \tau (x) I_p) \geq \sigma_{\min}(\Jc(x)\tp Q(x) \Jc(x)) + L_\tau \sigma_{\mathrm{res}}^2 \geq L_\tau \sigma_{\mathrm{res}}^2 \geq L_{\tau} \tilde{\rho}.
        \end{aligned}
    \end{equation*}
    Additionally, for any $x \notin \Z$, we have that $\mathrm{dist}(x, \X) > \tilde{\rho}$. Therefore, it directly follows from the choice of $\tau$ that 
    \begin{equation*}
         \Jc(x)\tp Q(x) \Jc(x) + \tau(x) I_p \succeq  \tau(x) I_p \succeq  L_{\tau} \tilde{\rho} I_p. 
    \end{equation*}
    As a result, we can conclude that $\Jc(x)\tp Q(x) \Jc(x) + \tau(x) I_p \succeq \min\left\{\frac{\sigma_{Q}}{2}, L_{\tau} \tilde{\rho} \right\} I_p \succ 0$ holds for any $x \in \Rn$. This completes the proof. 
\end{proof}

Lemma \ref{Le_aux_1} illustrates the well-definedness of $J(x)$ over $\Rn$. Moreover, from the Lipschitz smoothness of $\Jc(x)$ over $\Rn$, we can conclude the Lipschitz smoothness of $J(x)$ over $\W$. Then we make the following definitions on the notations in Table \ref{Simbol_Definition_2}. 
\begin{table}[h!]
\centering
\small 
\begin{tabular}{c p{0.85\textwidth}}
\toprule
\textbf{Notation} & \textbf{Definition} \\
\midrule
$M_{J}$ & $\sup_{x \in \W} \norm{J(x)}$ \\[5pt]
$L_{J}$ & $\sup_{x, y \in \W, ~ x\neq y} \frac{\norm{J(x) - J(y)}}{\norm{x-y}}$ \\[5pt]
$M_{H}$ & $\sup_{x \in \W} \norm{H(x)}$ \\[5pt]
$L_{H}$ & $\sup_{x, y \in \W, ~ x\neq y} \frac{\norm{H(x) - H(y)}}{\norm{x-y}}$ \\[5pt]
$\rho$ & $\min\left\{\tilde{\rho}, 
\frac{\sigma_Q L_Q M_J}{M_QL_\tau}, 
\frac{\sigma_Q}{8M_c^2 L_Q},\, \frac{\sigma_c}{8L_c}\right\}$ 
\\[5pt]
$\Y$ & $\{x \in \M: \mathrm{dist}(x, \X) \leq \rho\}. $
\\[5pt]
$\mu_{\max}$ & $\min\left\{ 
\begin{array}{l}
\frac{\rho}{64(M_J M_f+1)}, \, 
\frac{\sigma_{Q}}{16  M_c^2L_QM_JM_f  + (16M_c^2M_Q+4\sigma_Q) M_H },\, \frac{1}{4L_H+12M_H+4L_f},
\\[5pt]
\frac{\rho^2}{16( \sup_{w, z \in \W} f(w) - f(z))},\, 
\frac{\sigma_c}{  6(M_J+1) M_f L_c}
\end{array}
\right\}$.\\
\bottomrule
\end{tabular}

\caption{The definition of some necessary constants in the theoretical analysis based on Lemma \ref{Le_aux_1}. Here the mapping $H(x) := \nabla J(x)[\nabla f(x)] + J(x) \nabla^2 f(x)$.}
    \label{Simbol_Definition_2}
\end{table}

\subsection{Basic properties of forward-backward semi-envelope}
\label{Subsection_basic_property}

In this subsection, we aim to establish the equivalence between \eqref{Prob_Ori} and \eqref{Prob_FBE}. We begin our theoretical analysis with several technical lemmas. The following lemma illustrates the properties of $J(x) \nabla c(x)$. 
\begin{lem}
    \label{Le_aux_2}
    Suppose Assumption \ref{Assumption_f} and Assumption \ref{Assumption_Q} hold. Then for any $x \in \Rn$, it holds that 
    \begin{equation}
        J(x) \nabla c(x) = \tau(x)\Jc(x)(\Jc(x)\tp Q(x) \Jc(x) + \tau(x) I_p )^{-1}.
    \end{equation}
    In particular, $J(x)\Jc(x) = 0$ holds for any $x \in \X \cap \M$.
\end{lem}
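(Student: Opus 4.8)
The plan is to prove the identity by a direct algebraic manipulation of the definition of $J(x)$, using an elementary resolvent identity. First I would abbreviate $B := \Jc(x)\tp Q(x)\Jc(x) \in \bb{S}^{p\times p}_+$ and $\tau := \tau(x)$, recording that $B + \tau I_p$ is invertible for every $x \in \Rn$ thanks to Lemma \ref{Le_aux_1}, so that all the steps below are legitimate. Substituting the definition of $J(x)$ and right-multiplying by $\Jc(x)$ gives
\[
J(x)\Jc(x) = \Jc(x) - \Jc(x)(B+\tau I_p)^{-1}\Jc(x)\tp Q(x)\Jc(x) = \Jc(x)\big(I_p - (B+\tau I_p)^{-1}B\big).
\]
Then I would rewrite the bracketed factor as $I_p - (B+\tau I_p)^{-1}B = (B+\tau I_p)^{-1}\big((B+\tau I_p) - B\big) = \tau\,(B+\tau I_p)^{-1}$, which yields exactly
\[
J(x)\Jc(x) = \tau(x)\,\Jc(x)\big(\Jc(x)\tp Q(x)\Jc(x) + \tau(x)I_p\big)^{-1}.
\]

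For the second assertion, I would observe that for $x \in \X \cap \M$ one has $c(x) = 0$ and $\mathrm{dist}(x,\X) = 0$, hence $\tau(x) = L_\tau\big(\norm{c(x)}^2 + \mathrm{dist}(x,\X)^2\big) = 0$. Plugging $\tau(x) = 0$ into the formula just derived immediately gives $J(x)\Jc(x) = 0$; note that the matrix $\big(\Jc(x)\tp Q(x)\Jc(x)\big)^{-1}$ still makes sense there by Assumption \ref{Assumption_f}(3) (or again by Lemma \ref{Le_aux_1}), but this is immaterial since it is multiplied by the scalar $0$.

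I do not expect any real obstacle here: the argument is purely algebraic. The only points requiring a word of care are (i) invoking Lemma \ref{Le_aux_1} to guarantee that $B + \tau(x) I_p$ is invertible on all of $\Rn$, so that the inversions and the cancellation are valid, and (ii) noting that $\tau$ vanishes precisely on $\X \cap \M$ because both the constraint residual $\norm{c(x)}$ and the distance $\mathrm{dist}(x,\X)$ vanish there. No additional lemma is needed.
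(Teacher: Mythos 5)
Your proposal is correct and follows essentially the same route as the paper: both derive the identity by the same resolvent manipulation, writing $\Jc(x)\tp Q(x)\Jc(x)$ as $(\Jc(x)\tp Q(x)\Jc(x)+\tau(x)I_p)-\tau(x)I_p$ after invoking Lemma \ref{Le_aux_1} for invertibility. Your explicit remark that $\tau(x)=0$ on $\X\cap\M$ (with $\Jc(x)\tp Q(x)\Jc(x)\succ 0$ there) simply spells out the ``in particular'' step that the paper leaves implicit.
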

\begin{proof}
    As demonstrated in Lemma \ref{Le_aux_1}, $J(x)$ is well-defined over $\Rn$. Then it directly follows from the definition of $J(x)$ that, for any $x \in \Rn$, we have
    \begin{equation*}
        \begin{aligned}
            &J(x) \nabla c(x) = \nabla c(x) - \Jc(x) (\Jc(x)\tp Q(x) \Jc(x) + \tau(x) I_p )^{-1} \big(\Jc(x)\tp Q(x)\nabla c(x)\big)\\
            ={}& \nabla c(x) - \nabla c(x) (\Jc(x)\tp Q(x) \Jc(x) + \tau(x) I_p )^{-1} \big(\Jc(x)\tp Q(x) \Jc(x) + \tau(x) I_p - \tau(x)I_p\big)\\
            ={}& \tau(x) \nabla c(x)  (\Jc(x)\tp Q(x) \Jc(x) + \tau(x) I_p )^{-1}.
        \end{aligned}
    \end{equation*}
    This completes the proof. 
\end{proof}

Moreover, we present the following lemma showing that $J(x)$ is an identity mapping when restricted to the subspace $\mathrm{range}(\NX(x))$. The proof of Lemma \ref{Le_aux_3} directly follows from the Assumption \ref{Assumption_Q}(2), hence is omitted for simplicity.  
\begin{lem}
    \label{Le_aux_3}
    Suppose Assumption \ref{Assumption_f} and Assumption \ref{Assumption_Q} hold. Then for any $x \in \Rn $ and any $d \in \mathrm{range}(\NX(x))$, it holds that $J(x) d = d$. 
\end{lem}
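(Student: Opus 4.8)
The plan is to reduce the statement to the defining property of the projective mapping in Assumption \ref{Assumption_Q}(2), after which the claim becomes a one-line verification. First I would write out, for an arbitrary $x \in \Rn$ and $d \in \mathrm{range}(\NX(x))$,
\[
    J(x) d = d - \Jc(x)\,\big(\Jc(x)\tp Q(x) \Jc(x) + \tau(x) I_p \big)^{-1}\Jc(x)\tp Q(x)\, d,
\]
which is well-defined because Lemma \ref{Le_aux_1} guarantees that $\Jc(x)\tp Q(x) \Jc(x) + \tau(x) I_p$ is positive definite, hence invertible. Therefore it suffices to show that the correction term on the right-hand side vanishes, and for that it is enough to establish $Q(x) d = 0$, since then $\Jc(x)\tp Q(x) d = 0$ as well.

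Next I would dispose of the degenerate case and the substantive case separately. If $x \notin \X$, then by convention $\NX(x) = \emptyset$, so $\mathrm{range}(\NX(x)) = \{0\}$; thus $d = 0$ and $J(x)d = 0 = d$ trivially. If $x \in \X$, then Assumption \ref{Assumption_Q}(2) asserts $\mathrm{null}(Q(x)) = \mathrm{range}(\NX(x))$, so $d \in \mathrm{range}(\NX(x))$ immediately gives $Q(x)d = 0$. Substituting this into the displayed expression yields $J(x) d = d - \Jc(x)(\,\cdot\,)^{-1}\Jc(x)\tp Q(x) d = d$, which is the desired conclusion.

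I do not expect any real obstacle in this argument: the entire content is the identity $\mathrm{null}(Q(x)) = \mathrm{range}(\NX(x))$ from Assumption \ref{Assumption_Q}(2) together with the invertibility from Lemma \ref{Le_aux_1}. The only point that warrants a brief remark is the convention that $\NX(x)$ (and hence its span) is trivial for $x \notin \X$, which is precisely what makes the statement meaningful for all $x \in \Rn$ rather than only for $x \in \X$; once that is acknowledged, no further case analysis or estimation is needed.
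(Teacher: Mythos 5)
Your argument is correct and is exactly the verification the paper has in mind when it says the lemma ``directly follows from Assumption \ref{Assumption_Q}(2)'' and omits the proof: since $Q(x)d=0$ for $d\in\mathrm{range}(\NX(x))$, the correction term $\Jc(x)\big(\Jc(x)\tp Q(x)\Jc(x)+\tau(x)I_p\big)^{-1}\Jc(x)\tp Q(x)d$ vanishes and $J(x)d=d$. Your handling of $x\notin\X$ via the convention $\NX(x)=\emptyset$ (so $\mathrm{range}(\NX(x))=\{0\}$) is a reasonable way to make the statement literally valid for all $x\in\Rn$, consistent with the paper's intent.
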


Next we present the following lemma that establishes the existence of $\psimu(x)$ for $x\in\Rn$. 
\begin{lem}
    \label{Le_FBE_envelope_C1}
    Suppose Assumption \ref{Assumption_f} and Assumption \ref{Assumption_Q} hold, then $\psimu(x)$ exists and is locally Lipschitz continuous over $\Rn$. 
\end{lem}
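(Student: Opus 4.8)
The plan is to exhibit $\psimu$ in closed form via $\Tmu$, and then verify local Lipschitz continuity ingredient by ingredient. \textbf{Existence.} Fix $x \in \Rn$. Lemma~\ref{Le_aux_1} guarantees that $\Jc(x)\tp Q(x)\Jc(x) + \tau(x)I_p$ is invertible, so $J(x)$, and hence the vector $J(x)\nabla f(x)$, are well-defined. The map $w \mapsto f(x) + \inner{J(x)\nabla f(x), w-x} + \frac{1}{2\mu}\norm{w-x}^2$ is a strongly convex quadratic (modulus $1/\mu > 0$) and $\X$ is nonempty, closed and bounded, so its minimum over $\X$ is attained at the unique point $\Tmu(x) = \Pi_{\X}(x - \mu J(x)\nabla f(x))$ from \eqref{Eq_Def_TF}; consequently
\[
\psimu(x) = f(x) + \inner{J(x)\nabla f(x),\, \Tmu(x) - x} + \frac{1}{2\mu}\norm{\Tmu(x) - x}^2
\]
is well-defined and finite for every $x \in \Rn$.

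For local Lipschitz continuity I would show that the three maps $x \mapsto f(x)$, $x \mapsto J(x)\nabla f(x)$ and $x \mapsto \Tmu(x)$ are each locally Lipschitz and locally bounded on $\Rn$; since the displayed identity exhibits $\psimu$ as a finite sum of products of such maps (plus a squared norm), local Lipschitz continuity of $\psimu$ follows from the usual product/composition rules for locally Lipschitz maps. The map $f$ is locally Lipschitz because $\nabla f$ is continuous, and $\nabla f$ is itself locally Lipschitz by Assumption~\ref{Assumption_f}. The map $\Tmu = \Pi_{\X} \circ (\mathrm{id} - \mu\, J\nabla f)$ is locally Lipschitz as soon as $J\nabla f$ is, since $\Pi_{\X}$ is nonexpansive by the convexity of $\X$, and it is locally bounded because $\X$ is bounded. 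Thus everything reduces to establishing that $J$ is locally Lipschitz on $\Rn$.

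To handle $J$, write $J(x) = I_n - \Jc(x)M(x)^{-1}\Jc(x)\tp Q(x)$ with $M(x) := \Jc(x)\tp Q(x)\Jc(x) + \tau(x)I_p$. Here $\Jc$ and $c$ are locally Lipschitz by Assumption~\ref{Assumption_f}(2), $Q$ is locally Lipschitz by Assumption~\ref{Assumption_Q}(1), and $\tau(x) = L_\tau(\norm{c(x)}^2 + \mathrm{dist}(x,\X)^2)$ is locally Lipschitz because $\mathrm{dist}(\cdot,\X)$ is $1$-Lipschitz; hence $M$ is locally Lipschitz and locally bounded. By Lemma~\ref{Le_aux_1}, $M(x) \succeq m I_p$ with $m := \min\{\sigma_{Q}/2,\, L_\tau\tilde{\rho}\} > 0$ for every $x \in \Rn$, so $\norm{M(x)^{-1}} \le 1/m$ uniformly, and the identity $M(x)^{-1} - M(y)^{-1} = M(x)^{-1}(M(y)-M(x))M(y)^{-1}$ shows that $x \mapsto M(x)^{-1}$ is locally Lipschitz on $\Rn$. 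Multiplying these locally Lipschitz, locally bounded factors yields that $J$, and therefore $J\nabla f$, is locally Lipschitz on $\Rn$, which closes the argument. I expect the only genuinely delicate step to be exactly this one: the uniform-in-$x$ positive-definite lower bound on $M(x)$ from Lemma~\ref{Le_aux_1}, which keeps the resolvent $M(x)^{-1}$ bounded and Lipschitz, together with the observation that $\tau$ (hence $M$) remains locally Lipschitz despite involving the distance function; the remaining steps are routine calculus of locally Lipschitz mappings.
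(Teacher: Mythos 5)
Your proof is correct and follows essentially the same route as the paper: existence from the well-definedness of $J$ via Lemma~\ref{Le_aux_1} together with the projection formula for $\Tmu$, and local Lipschitz continuity of $\psimu$ from the local Lipschitz continuity of $J$ and $\nabla f$. The paper states this argument only in outline, so your explicit verification that $M(x)^{-1}$ is uniformly bounded and locally Lipschitz (via the resolvent identity and the lower bound from Lemma~\ref{Le_aux_1}) simply fills in details the paper leaves implicit.
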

\begin{proof}
    As demonstrated in Lemma \ref{Le_aux_1}, $J(x)$ is well-defined over $\Rn$. Therefore, from the formulation of $\psimu$, it holds that $\psimu(x)$ exists for any $x \in \Rn$. Moreover, from the locally Lipschitz continuity of $J(x)$ and $\nabla f(x)$ over $\Rn$, we can conclude that $\psimu(x)$ is locally Lipschitz continuous over $\Rn$. This completes the proof. 
\end{proof}

Then the following lemma establish the relationship between $\psimu(x)$ and $f(x)$. 
\begin{lem}
    \label{Le_FBE_fval_compare}
    Suppose Assumption \ref{Assumption_f} and Assumption \ref{Assumption_Q} hold, then for any $x \in \X$, it holds that 
    \begin{equation}
        \psimu(x) \leq f(x) - \frac{1}{2\mu} \norm{\Tmu(x) - x}^2.
    \end{equation}
\end{lem}
\begin{proof}
    From the optimality condition \eqref{Eq_optim_Tmu} of the proximal subproblem, it holds that 
    \begin{equation*}
        0 \in \frac{1}{\mu}(  \Tmu(x) - x ) + J(x) \nabla f(x) + \NX(\Tmu(x)).
    \end{equation*}
    The convexity of $\X$ guarantees that $ x - \Tmu(x)  \in \TX(\Tmu(x))$. Then it holds that $\inner{\Tmu(x) - x, d} \geq 0$ for any $d \in \NX(\Tmu(x))$, which leads to the following inequality,
    \begin{equation*}
        0 \geq \inner{\Tmu(x) - x, \frac{1}{\mu}(  \Tmu(x) - x ) + J(x) \nabla f(x)} = \frac{1}{\mu} \norm{\Tmu(x) - x}^2 + \inner{\Tmu(x) - x, J(x)\nabla f(x)}.
    \end{equation*}
    As a result, we can conclude that 
    \begin{equation*}
        \psimu(x) = f(x) + \inner{J(x)\nabla f(x), \Tmu(x) - x} + \frac{1}{2\mu}\norm{\Tmu(x) - x}^2 \leq  f(x) -\frac{1}{2\mu}\norm{\Tmu(x) - x}^2. 
    \end{equation*}
    This completes the proof. 
\end{proof}

Next, in the following lemma, we derive an upper bound for $\norm{\Tmu(x) - x}$. 
\begin{lem}
    \label{Le_FBE_Tx_x}
     Suppose Assumption \ref{Assumption_f} and Assumption \ref{Assumption_Q} hold. Then for any $\mu > 0$, it holds that 
    \begin{equation}
        \norm{\Tmu(x) - x} \leq M_{J} M_{f} \mu + \mathrm{dist}(x, \X),\quad \forall ~x \in \W.
    \end{equation}
\end{lem}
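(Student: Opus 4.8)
The plan is to start from the closed-form expression for $\Tmu$ given in \eqref{Eq_Def_TF}, namely $\Tmu(x) = \Pi_{\X}\big(x - \mu J(x)\nabla f(x)\big)$, which is valid since Lemma \ref{Le_aux_1} guarantees $J(x)$ is well-defined over $\Rn$. Write $z := x - \mu J(x)\nabla f(x)$ and insert the intermediate point $\Pi_{\X}(x)$ via the triangle inequality:
\begin{equation*}
    \norm{\Tmu(x) - x} = \norm{\Pi_{\X}(z) - x} \leq \norm{\Pi_{\X}(z) - \Pi_{\X}(x)} + \norm{\Pi_{\X}(x) - x}.
\end{equation*}
The second term is exactly $\mathrm{dist}(x,\X)$ by definition.

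For the first term, I would invoke the standard fact that the projection onto the closed convex set $\X$ is nonexpansive, so $\norm{\Pi_{\X}(z) - \Pi_{\X}(x)} \leq \norm{z - x} = \mu\norm{J(x)\nabla f(x)} \leq \mu\norm{J(x)}\,\norm{\nabla f(x)}$. Finally, for $x \in \W$ the definitions in Tables \ref{Simbol_Definition} and \ref{Simbol_Definition_2} give $\norm{\nabla f(x)} \leq M_f$ and $\norm{J(x)} \leq M_J$ (both suprema being finite by compactness of $\W$), which yields $\norm{\Pi_{\X}(z) - \Pi_{\X}(x)} \leq M_J M_f \mu$. Combining the two bounds gives the claimed inequality.

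There is no real obstacle here: the only ingredients are the nonexpansiveness of the Euclidean projection onto a convex set, the triangle inequality, and the boundedness of $J$ and $\nabla f$ on $\W$ that is already recorded in the constant tables. The one point worth stating carefully is that the constants $M_J$ and $M_f$ are genuinely finite, which follows because $\W = \{x : \mathrm{dist}(x,\X)\le 1\}$ is compact (as $\X$ is compact) and $J$, $\nabla f$ are continuous there by Assumption \ref{Assumption_f} and Lemma \ref{Le_aux_1}.
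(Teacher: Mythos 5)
Your proposal is correct and follows essentially the same route as the paper's own proof: split $\Tmu(x)-x$ through the intermediate point $\Pi_{\X}(x)$, use the nonexpansiveness of $\Pi_{\X}$ (from the convexity of $\X$) to bound the first piece by $\mu\norm{J(x)\nabla f(x)} \leq M_J M_f \mu$, and identify the second piece with $\mathrm{dist}(x,\X)$. The only difference is that you make the nonexpansiveness step and the finiteness of $M_J, M_f$ explicit, which the paper leaves implicit.
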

\begin{proof}
    For any $x \in \W$ and any $\mu > 0$, it follows from the expression of $\Tmu$ and the convexity of $\X$ that  
    \begin{equation*}
        \begin{aligned}
            &\norm{\Tmu(x) - x} = \norm{\Pi_{\X}(x - \mu J(x)\nabla f(x)) -  \Pi_{\X}(x) + \Pi_{\X}(x) - x} \\
            \leq{}& \norm{\mu J(x) \nabla f(x)} + \norm{\Pi_{\X}(x) - x} \leq M_{J} M_{f} \mu + \mathrm{dist}(x, \X). 
        \end{aligned}
    \end{equation*}
    This completes the proof. 
\end{proof}

Then we have the following lemma illustrating the expression of the gradient of $\psimu$.  
\begin{lem}
    \label{Le_FBE_gradient}
    Suppose Assumption \ref{Assumption_f} and Assumption \ref{Assumption_Q} hold. Then for any $x \in \Rn$, we have
    \begin{equation}
        \nabla \psimu(x) = \frac{1}{\mu}(I_n - \mu H(x))(x - \Tmu(x)) + (I_n-J(x))\nabla f(x),
    \end{equation}
    where $H(x) = J(x)\nabla^2 f(x) + \nabla J(x)[\nabla f(x)].$
    Moreover, for any $\mu \leq \mu_{\max}$, we have
    \begin{enumerate}
        \item $ \psimu(x)$ is $(3(M_J+1) M_f + \frac{2\rho}{\mu})$-Lipschitz continuous over $\Y$, and
        \item $\nabla \psimu$ is $\frac{2}{\mu}$-Lipschitz continuous over $\Y$.
    \end{enumerate}
    
\end{lem}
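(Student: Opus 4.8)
The plan is to first obtain a closed‑form expression for $\nabla\psimu$ and then read off the two Lipschitz bounds from it, using the constants of Tables~\ref{Simbol_Definition}--\ref{Simbol_Definition_2} and the residual bound of Lemma~\ref{Le_FBE_Tx_x}. \textbf{Step 1 (gradient formula).} Completing the square in the quadratic defining $\psimu$ gives, for every $x\in\Rn$,
\begin{equation*}
\psimu(x)=f(x)-\frac{\mu}{2}\norm{J(x)\nabla f(x)}^2+\frac{1}{2\mu}\,\mathrm{dist}\big(x-\mu J(x)\nabla f(x),\,\X\big)^2,
\end{equation*}
with the last term minimized at $\Tmu(x)=\Pi_{\X}(x-\mu J(x)\nabla f(x))$ from \eqref{Eq_Def_TF}. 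By Lemma~\ref{Le_aux_1} the matrix $J(x)$ is well defined on $\Rn$; since $\Jc$, $Q$, $\tau$ are continuously differentiable and matrix inversion is smooth on the positive-definite cone, $x\mapsto J(x)$ is $C^1$ on $\Rn$, and hence so is $G:=J\nabla f$ (using $f\in C^2$). Because $y\mapsto\tfrac12\mathrm{dist}(y,\X)^2$ is $C^1$ on $\Rn$ with gradient $y\mapsto y-\Pi_{\X}(y)$ (the Moreau envelope of $\IX$ for the closed convex set $\X$), $\psimu$ is $C^1$ on $\Rn$. Differentiating term by term, the contributions $\pm\mu H(x)J(x)\nabla f(x)$ cancel and one is left with
\begin{equation*}
\nabla\psimu(x)=\frac{1}{\mu}\big(I_n-\mu H(x)\big)\big(x-\Tmu(x)\big)+\big(I_n-J(x)\big)\nabla f(x),
\end{equation*}
where $H(x)$ is the Jacobian of $x\mapsto J(x)\nabla f(x)$, i.e.\ $H(x)=J(x)\nabla^2f(x)+\nabla J(x)[\nabla f(x)]$. (Equivalently this is the Danskin/envelope identity: the partial gradient of the quadratic being minimized, evaluated at $w=\Tmu(x)$, which is legitimate because the minimizer $\Tmu$ is unique and locally Lipschitz.)

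\textbf{Step 2 (Lipschitz continuity of $\psimu$ on $\Y$).} Since $\Y\subseteq\{x:\mathrm{dist}(x,\X)\le\rho\}\subseteq\W$ and the set $\{x:\mathrm{dist}(x,\X)\le\rho\}$ is convex (a Minkowski sum of $\X$ with a closed ball), it is enough to bound $\norm{\nabla\psimu}$ on that set and invoke the mean value inequality. For $\mu\le\mu_{\max}\le\tfrac1{4L_H+12M_H+4L_f}$ one has $\mu M_H\le\tfrac1{12}$, hence $\norm{I_n-\mu H(x)}\le2$; Lemma~\ref{Le_FBE_Tx_x} gives $\norm{x-\Tmu(x)}\le M_JM_f\mu+\rho$; and $\norm{(I_n-J(x))\nabla f(x)}\le(1+M_J)M_f$. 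Substituting into the formula of Step~1 gives $\norm{\nabla\psimu(x)}\le(3M_J+1)M_f+\tfrac{2\rho}{\mu}\le 3(M_J+1)M_f+\tfrac{2\rho}{\mu}$, which is the asserted Lipschitz constant on $\Y$.

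\textbf{Step 3 (Lipschitz continuity of $\nabla\psimu$ on $\Y$).} Writing $R(x):=x-\Tmu(x)$ and $G:=J\nabla f$, the formula of Step~1 reads $\nabla\psimu=\tfrac1\mu R-H\,R+(\nabla f-G)$, with $H$ the Jacobian of $G$. The workhorse estimate is that $G$ is $M_H$-Lipschitz on the convex set $\W$ (mean value inequality, since $\sup_{\W}\norm{H}=M_H$). Using the identity $R(x)=\mu G(x)+(I-\Pi_{\X})\big(x-\mu G(x)\big)$ and the nonexpansiveness of $I-\Pi_{\X}$, this makes $R$ Lipschitz on $\W$ with constant $1+2\mu M_H$; it also gives $\norm{(\nabla f-G)(x)-(\nabla f-G)(y)}\le(L_f+M_H)\norm{x-y}$; and, together with $\sup_{\Y}\norm{R}\le M_JM_f\mu+\rho$ (Lemma~\ref{Le_FBE_Tx_x}), $\norm{H}\le M_H$ and the $L_H$-Lipschitz continuity of $H$, it gives $\norm{H(x)R(x)-H(y)R(y)}\le\big(M_H(1+2\mu M_H)+L_H(M_JM_f\mu+\rho)\big)\norm{x-y}$. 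Adding the three contributions,
\begin{equation*}
\norm{\nabla\psimu(x)-\nabla\psimu(y)}\le\Big(\tfrac1\mu+4M_H+2\mu M_H^2+L_HM_JM_f\mu+L_H\rho+L_f\Big)\norm{x-y},
\end{equation*}
so it remains to check that the part of the bracket beyond $\tfrac1\mu$ is at most $\tfrac1\mu$ when $\mu\le\mu_{\max}$: the bound $\mu\le\tfrac1{4L_H+12M_H+4L_f}$ controls $4M_H$, $2\mu M_H^2$, $L_H\rho$ (recall $\rho\le\tilde{\rho}\le1$) and $L_f$, while $\mu\le\tfrac{\rho}{64(M_JM_f+1)}$ (with $\rho\le1$) controls $L_HM_JM_f\mu$; each term is a small multiple of $\tfrac1\mu$ and together they stay below $\tfrac1\mu$. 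Hence $\norm{\nabla\psimu(x)-\nabla\psimu(y)}\le\tfrac2\mu\norm{x-y}$ on $\Y$.

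\textbf{Main obstacle.} Step~1 is the standard forward--backward-envelope differentiation, carried out with the additional matrix $J(x)$, and is routine provided one identifies $H(x)$ with the Jacobian of $x\mapsto J(x)\nabla f(x)$. The substantive part is the constant bookkeeping in Step~3: one must arrange the Lipschitz estimates so that every term in excess of $\tfrac1\mu$---notably those involving $M_H$, $L_H$, $L_f$ and the cross term $L_HM_JM_f$---is provably dominated by $\tfrac1\mu$ under the (deliberately conservative) definition of $\mu_{\max}$ in Table~\ref{Simbol_Definition_2}. A minor point worth making explicit is that $\Y\subseteq\M$ is not convex, so the Lipschitz continuity of $\psimu$ in Step~2 is obtained on the convex $\rho$-neighborhood of $\X$ that contains $\Y$.
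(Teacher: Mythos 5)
Your proof is correct and follows essentially the same route as the paper: the paper simply cites the standard forward–backward-envelope differentiation for the gradient formula (which you derive explicitly via the Moreau-envelope/completing-the-square identity), and then, as you do, bounds $\norm{\nabla\psimu}$ using Lemma \ref{Le_FBE_Tx_x} for part 1 and applies product-rule Lipschitz bookkeeping with the constants of Table \ref{Simbol_Definition_2} for part 2. Your version is in fact slightly more careful than the paper's in noting that $\Y$ is nonconvex and that the mean-value estimates must be taken on the convex neighborhood $\W$ (or the $\rho$-neighborhood of $\X$), and your numerical check that the excess terms stay below $1/\mu$ under $\mu\leq\mu_{\max}$ goes through.
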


\begin{proof}
    The derivation of the expression of $\nabla \psimu(x)$  directly follows from \cite{rockafellar2009variational}, hence is omitted for simplicity.

    Next we consider any $x \in \Y$, from Lemma \ref{Le_FBE_Tx_x}, we can conclude that  
\begin{equation}
    \label{Eq_Ub_nabla_psi}
    \begin{aligned}
        &\norm{\nabla \psimu(x)} \leq \frac{1}{\mu}\norm{I_n - \mu H(x)} \norm{x - \Tmu(x)} + \norm{I_n - J(x)} \norm{\nabla f(x)} \\
        \leq{}& \frac{2(M_J M_f \mu + \rho)}{\mu} + (1+ M_J)M_f \leq 3(M_J+1) M_f + \frac{2\rho}{\mu}. 
    \end{aligned}
\end{equation}
Therefore,  $ \psimu(x)$ is $(3(M_J+1) M_f + \frac{2\rho}{\mu})$-Lipschitz continuous over $\Y$. 

Furthermore, as demonstrated in \cite{rockafellar2009variational}, the projection mapping $\Pi_{\X}$ is $1$-Lipschitz continuous over $\Rn$. Therefore, the Lipschitz constant for the mapping $x \mapsto I_n - \mu H(x)$ is $(1 + \mu L_{H})$, the Lipschitz constant for the mapping $x \mapsto \frac{1}{\mu} (x - \Tmu(x))$ is $\frac{1}{\mu}(1 + \mu M_H)$, and the Lipschitz constant for the mapping $x \mapsto (I_n - J(x))\nabla f(x)$ is $L_f + L_H$. As a result, the Lipschitz constant for $\nabla \psimu(x)$ over $\X$ is
\begin{equation}
(\mu M_J M_f+\rho)(1 + \mu L_{H}) + \frac{1}{\mu}(1 + \mu M_H)^2 + L_f + L_H,
\end{equation}
which is smaller than $\frac{2}{\mu}$ based on the definition of $\mu_{\max}$ in Table \ref{Simbol_Definition_2}. This completes the proof.

\end{proof}

Furthermore, we present the following lemma illustrating that $\Tmu(x)-x$ is nearly perpendicular to the subspace spanned by $Q(\Tmu(x))\Jc(x)$. 
\begin{lem}
    \label{Le_FBE_error_bound_cQc}
    Suppose Assumption \ref{Assumption_f} and Assumption \ref{Assumption_Q} hold. Then for any $x \in \Y$ and any $\mu > 0$, it holds that 
    \begin{equation}
        \norm{\frac{1}{\mu}(\Tmu(x)-x)\tp Q(\Tmu(x))\Jc(x)} \leq 2M_{c} L_{Q} M_{J} M_{f} \norm{\Tmu(x) - x}. 
    \end{equation}
\end{lem}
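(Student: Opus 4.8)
The idea is to feed the first-order optimality condition for $\Tmu(x)$ into a left-multiplication by $Q(\Tmu(x))$: the defining property of the projective mapping annihilates the normal-cone part, leaving only $\nabla f(x)\tp J(x)\tp Q(\Tmu(x))\Jc(x)$, which I then bound by an algebraic identity for $J(x)\tp Q(x)\Jc(x)$ together with the Lipschitz continuity of $Q$.

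The first step would be to note that, since $\Tmu(x)=\Pi_{\X}\!\left(x-\mu J(x)\nabla f(x)\right)$ and $\X$ is convex, the characterization of the Euclidean projection (equivalently \eqref{Eq_optim_Tmu}, which holds verbatim for every $x\in\Rn$) supplies a vector $v\in\NX(\Tmu(x))$ with $\frac{1}{\mu}\big(\Tmu(x)-x\big)=-J(x)\nabla f(x)-v$. Because $\Tmu(x)\in\X$, Assumption \ref{Assumption_Q}(2) yields $v\in\mathrm{range}(\NX(\Tmu(x)))=\mathrm{null}(Q(\Tmu(x)))$, so $Q(\Tmu(x))v=0$ and, by symmetry of $Q(\Tmu(x))$, also $v\tp Q(\Tmu(x))=0$. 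Left-multiplying the displayed identity by $Q(\Tmu(x))\Jc(x)$ then eliminates the $v$-term:
\[
    \frac{1}{\mu}\big(\Tmu(x)-x\big)\tp Q(\Tmu(x))\Jc(x) = -\,\nabla f(x)\tp J(x)\tp Q(\Tmu(x))\Jc(x).
\]

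Then it remains to bound $\norm{J(x)\tp Q(\Tmu(x))\Jc(x)}$, which I would do by splitting $Q(\Tmu(x))=Q(x)+(Q(\Tmu(x))-Q(x))$. Since $x\in\Y$ implies $x\in\W$ (as $\mathrm{dist}(x,\X)\le\rho\le\tilde{\rho}\le 1$) and $\Tmu(x)\in\X\subseteq\W$, the perturbation term obeys $\norm{J(x)\tp(Q(\Tmu(x))-Q(x))\Jc(x)}\le M_J L_Q M_c\norm{\Tmu(x)-x}$ via the constants of Tables \ref{Simbol_Definition}--\ref{Simbol_Definition_2}. For the main term, the computation from the proof of Lemma \ref{Le_aux_2} gives the identity
\[
    J(x)\tp Q(x)\Jc(x) = \tau(x)\, Q(x)\Jc(x)\,\big(\Jc(x)\tp Q(x)\Jc(x)+\tau(x)I_p\big)^{-1}.
\]
Here I exploit that $x\in\Y\subseteq\M$ forces $c(x)=0$, whence $\tau(x)=L_\tau\,\mathrm{dist}(x,\X)^2$; combining $\mathrm{dist}(x,\X)\le\rho$ with $\mathrm{dist}(x,\X)\le\norm{\Pi_{\X}(x-\mu J(x)\nabla f(x))-x}=\norm{\Tmu(x)-x}$ bounds $\tau(x)\le L_\tau\rho\,\norm{\Tmu(x)-x}$. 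Estimating the inverse through Lemma \ref{Le_aux_1}, using $\norm{Q(x)\Jc(x)}\le M_Q M_c$, and finally invoking the defining bound $\rho\le \sigma_Q L_Q M_J/(M_Q L_\tau)$ from Table \ref{Simbol_Definition_2}, one controls $\norm{J(x)\tp Q(x)\Jc(x)}$ by a constant multiple of $M_c L_Q M_J\norm{\Tmu(x)-x}$. Adding the two estimates, multiplying by $\norm{\nabla f(x)}\le M_f$ (legitimate since $x\in\W$), and collecting constants should yield the asserted inequality.

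I expect the last estimate to be the crux. The whole point is to extract a factor of $\norm{\Tmu(x)-x}$ rather than a fixed constant, which is possible only because (i) $x\in\M$ kills $c(x)$, so that $\tau(x)$ carries a $\mathrm{dist}(x,\X)^2$ factor, and (ii) $\mathrm{dist}(x,\X)$ admits the two separate bounds by $\rho$ and by $\norm{\Tmu(x)-x}$. Making the final constant land at exactly $2M_c L_Q M_J M_f$ will require carefully balancing the lower bound furnished by Lemma \ref{Le_aux_1} against the term $\sigma_Q L_Q M_J/(M_Q L_\tau)$ in the definition of $\rho$; everything past that is routine operator-norm bookkeeping with the constants of Tables \ref{Simbol_Definition} and \ref{Simbol_Definition_2}.
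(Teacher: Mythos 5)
Your proposal is correct and follows essentially the same route as the paper's proof: use the optimality condition and $Q(\Tmu(x))\NX(\Tmu(x))=\{0\}$ to reduce the problem to bounding $\nabla c(x)\tp Q(\Tmu(x))J(x)\nabla f(x)$, split $Q(\Tmu(x))=Q(x)+\bigl(Q(\Tmu(x))-Q(x)\bigr)$ with the difference controlled by $L_Q\norm{\Tmu(x)-x}$, and exploit the identity $\nabla c(x)\tp Q(x)J(x)=\tau(x)\bigl(\Jc(x)\tp Q(x)\Jc(x)+\tau(x)I_p\bigr)^{-1}\nabla c(x)\tp Q(x)$ together with $c(x)=0$, $\mathrm{dist}(x,\X)\le\min\{\rho,\norm{\Tmu(x)-x}\}$, and $\rho\le\sigma_Q L_Q M_J/(M_Q L_\tau)$. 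The one point you leave open---how to bound the inverse so the constant lands at $2M_cL_QM_JM_f$---is handled in the paper not through the global estimate of Lemma \ref{Le_aux_1} but through the local bound $\sigma_{\min}(\Jc(x)\tp Q(x)\Jc(x))\ge\sigma_Q/2$ over $\Y$ from \eqref{eq-3.2}, i.e.\ the paper simply takes $\norm{(\Jc(x)\tp Q(x)\Jc(x)+\tau(x)I_p)^{-1}}\le 1/\sigma_Q$ at that step.
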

\begin{proof}
    For any $x \in \Y$ and any $\mu > 0$, from the optimality condition of $\Tmu(x)$, it holds that 
    \begin{equation*}
        0 \in \frac{1}{\mu}(\Tmu(x)-x) + J(x)\nabla f(x) + \NX(\Tmu(x)). 
    \end{equation*}
    Moreover, for any $x \in \Rn$, it holds that 
    \begin{equation*}
        \begin{aligned}
            &\nabla c(x)\tp Q(x) J(x) \\
            ={}& \nabla c(x)\tp Q(x)  - \nabla c(x)\tp Q(x) \nabla c(x)(\Jc(x)\tp Q(x) \Jc(x) + \tau(x) I_p )^{-1} \nabla c(x)\tp Q(x)\\
            ={}& \left(I_p - \nabla c(x)\tp Q(x) \nabla c(x)\left(\Jc(x)\tp Q(x) \Jc(x) + \tau(x) I_p \right)^{-1} \right)\nabla c(x)\tp Q(x)\\
            ={}& \tau(x) (\Jc(x)\tp Q(x) \Jc(x) + \tau(x) I_p )^{-1} \nabla c(x)\tp Q(x). 
        \end{aligned}
    \end{equation*}
    Together with the fact that $Q(\Tmu(x))\NX(\Tmu(x)) = \{0\}$ from Assumption \ref{Assumption_Q}(2), we have that
    \begin{equation*}
        \begin{aligned}
            &\frac{1}{\mu}\nabla c(x)\tp Q(\Tmu(x)) (\Tmu(x)-x) = -\nabla c(x)\tp Q(\Tmu(x)) J(x) \nabla f(x) \\
            ={}& -\nabla c(x)\tp (Q(\Tmu(x)) - Q(x)) J(x) \nabla f(x) -  \nabla c(x)\tp Q(x) J(x)\nabla f(x)\\
            ={}& -\nabla c(x)\tp (Q(\Tmu(x)) - Q(x)) J(x) \nabla f(x) \\
            & \comm{- } \tau(x) (\nabla c(x)\tp Q(x) \nabla c(x) + \tau(x) I_p)^{-1} \nabla c(x)\tp Q(x) \nabla f(x).
        \end{aligned}
    \end{equation*}
    Therefore, it holds that
    \begin{equation*}
        \begin{aligned}
            &\norm{\frac{1}{\mu}\nabla c(x)\tp Q(\Tmu(x)) (\Tmu(x)-x) }\\
            \leq{}& \norm{\nabla c(x)} \norm{Q(\Tmu(x)) - Q(x)} \norm{J(x)} \norm{\nabla f(x)} + \frac{L_{\tau}}{\sigma_Q} M_c M_Q M_f \mathrm{dist}(x, \X)^2 \\
            \leq{}& \norm{\nabla c(x)} \norm{Q(\Tmu(x)) - Q(x)} \norm{J(x)} \norm{\nabla f(x)}  + \frac{L_\tau}{\sigma_Q} M_c M_Q M_f \norm{\Tmu(x) - x}^2 \\
            \leq{}& 2M_{c} L_{Q}  M_{J}  M_{f}\norm{\Tmu(x) - x}. 
        \end{aligned}
    \end{equation*}
    Note that to derive the last inequality, we used the fact that $\norm{\Tmu(x)-x}\leq \rho\leq \frac{\sigma_Q L_Q M_J}{M_QL_\tau}$. This completes the proof. 
\end{proof}

Now we present the following proposition illustrating the equivalence between \eqref{Prob_FBE} and \eqref{Prob_Ori} for any feasible point. 
\begin{prop}
    \label{Prop_equivalence_FBE}
    Suppose Assumption \ref{Assumption_f} and Assumption \ref{Assumption_Q} hold. If $x \in \X \cap \M$  is a first-order stationary point of \eqref{Prob_Ori}, then $x$ is a first-order stationary point of \eqref{Prob_FBE} with any $\mu > 0$. 
\end{prop}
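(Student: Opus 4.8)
The plan is to start from the first-order stationarity of $x$ for \eqref{Prob_Ori}, namely $0 \in \nabla f(x) + \mathrm{range}(\Jc(x)) + \NX(x)$, and translate it into the stationarity condition for \eqref{Prob_FBE}, which (by the definition in Subsection~\ref{Subsection_Stationarity} applied with $h = \psimu$) reads $0 \in \nabla \psimu(x) + \mathrm{range}(\Jc(x))$. First I would observe that since $x \in \X \cap \M$, Lemma~\ref{Le_aux_2} gives $J(x)\Jc(x) = 0$, and $\tau(x) = 0$ (because $c(x) = 0$ and $\mathrm{dist}(x,\X) = 0$). The key structural facts I would assemble are: (i) at such an $x$, $\Tmu(x) = \Pi_\X(x - \mu J(x)\nabla f(x))$; (ii) the gradient formula from Lemma~\ref{Le_FBE_gradient}, $\nabla \psimu(x) = \frac{1}{\mu}(I_n - \mu H(x))(x - \Tmu(x)) + (I_n - J(x))\nabla f(x)$; and (iii) that $J(x)$ acts as the identity on $\mathrm{range}(\NX(x))$ by Lemma~\ref{Le_aux_3}.

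The crux is to show $\Tmu(x) = x$, i.e. that $x$ is already a fixed point of the proximal step. From the stationarity of \eqref{Prob_Ori}, write $0 = \nabla f(x) + \Jc(x)\lambda + v$ for some $\lambda \in \Rp$ and $v \in \NX(x)$. Applying $J(x)$ and using $J(x)\Jc(x) = 0$ together with $J(x)v = v$ (Lemma~\ref{Le_aux_3}, since $v \in \mathrm{range}(\NX(x))$), I get $0 = J(x)\nabla f(x) + v$, so $J(x)\nabla f(x) = -v \in \NX(x)$. Hence $x - \mu J(x)\nabla f(x) = x + \mu v$ with $v \in \NX(x)$, and by the standard projection characterization $\Pi_\X(x + \mu v) = x$ whenever $x \in \X$ and $v \in \NX(x)$; therefore $\Tmu(x) = x$.

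With $\Tmu(x) = x$ in hand, the gradient formula collapses to $\nabla \psimu(x) = (I_n - J(x))\nabla f(x)$. It remains to check that $(I_n - J(x))\nabla f(x) \in \nabla f(x) + \mathrm{range}(\Jc(x))$, equivalently that $-J(x)\nabla f(x) \in \mathrm{range}(\Jc(x)) - \nabla f(x) + \nabla f(x)$... more directly: $0 \in \nabla \psimu(x) + \mathrm{range}(\Jc(x))$ iff $-(I_n - J(x))\nabla f(x) \in \mathrm{range}(\Jc(x))$. But $J(x)\nabla f(x) = -v \in \NX(x)$ need not lie in $\mathrm{range}(\Jc(x))$, so instead I would go back to $0 = \nabla f(x) + \Jc(x)\lambda + v$ and rewrite: $\nabla \psimu(x) = \nabla f(x) - J(x)\nabla f(x) = \nabla f(x) + v = -\Jc(x)\lambda \in \mathrm{range}(\Jc(x))$, hence $0 = \nabla \psimu(x) + \Jc(x)\lambda \in \nabla \psimu(x) + \mathrm{range}(\Jc(x))$, which is exactly first-order stationarity of \eqref{Prob_FBE}.

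The main obstacle I anticipate is the clean verification that $\Tmu(x) = x$, i.e. correctly invoking the projection/normal-cone characterization and making sure $J(x)\nabla f(x)$ lands in $\NX(x)$ rather than merely in some larger space — this hinges on combining Lemma~\ref{Le_aux_2} ($J(x)\Jc(x) = 0$ on $\X \cap \M$) with Lemma~\ref{Le_aux_3} ($J(x)$ restricts to the identity on $\mathrm{range}(\NX(x))$), and on the fact that $\NX(x) \subseteq \mathrm{range}(\NX(x))$ so the latter lemma applies. Everything else is routine substitution into the gradient formula and bookkeeping of which subspace each term belongs to; no quantitative estimates or smallness of $\mu$ are needed here, consistent with the statement holding for \emph{any} $\mu > 0$.
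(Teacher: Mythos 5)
Your proposal is correct and follows essentially the same route as the paper: apply $J(x)$ to the stationarity condition using Lemma~\ref{Le_aux_2} and Lemma~\ref{Le_aux_3} to get $0 \in J(x)\nabla f(x) + \NX(x)$, conclude $\Tmu(x)=x$, and then read off stationarity of \eqref{Prob_FBE} from the gradient formula $\nabla\psimu(x) = (I_n - J(x))\nabla f(x)$. The only cosmetic difference is that you identify $-(I_n-J(x))\nabla f(x)$ with $\Jc(x)\lambda$ via the explicit multiplier decomposition, whereas the paper notes directly that $\mathrm{range}(J(x)-I_n) \subseteq \mathrm{range}(\Jc(x))$ from the formula for $J(x)$; the two are equivalent.
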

\begin{proof}
    For any $x \in \X \cap \M$ that is a first-order stationary point of \eqref{Prob_Ori}, it holds that 
    \begin{equation}
        \label{Eq_Prop_equivalence_FBE_0}
        0 \in \nabla f(x) + \mathrm{range}(\nabla c(x)) + \NX(x).
    \end{equation}
    Together with Lemma \ref{Le_aux_2} and Lemma \ref{Le_aux_3}, it holds that $J(x)\nabla c(x) = 0$ for any $x \in \X \cap \M$, and $J(x)d = d$ holds for any $d \in \NX(x)$. Therefore, by multiplying $J(x)$ to both sides of \eqref{Eq_Prop_equivalence_FBE_0}, it holds that
    \begin{equation*}
        0 \in J(x)\nabla f(x) + \NX(x).
    \end{equation*}
    Then from the optimality condition of the proximal subproblem in \eqref{Prob_FBE}, for any $\mu > 0$, it holds that 
    \begin{equation*}
        x \in \arg\min_{w \in \X} f(x) + \inner{J(x)\nabla f(x), w-x} +\frac{1}{2\mu} \norm{w-x}^2. 
    \end{equation*}
    Thus by definition, $\Tmu(x) = x$, hence 
    \begin{equation*}
        \nabla \psimu(x) = (I_n - J(x))\nabla f(x). 
    \end{equation*}
    Moreover, since 
    \begin{equation*}
        J(x) - I_n = -\Jc(x) \Big((\Jc(x)\tp Q(x) \Jc(x) + \tau(x) I_p )^{-1}\Jc(x)\tp Q(x) \Big),
    \end{equation*}
    it holds that $\mathrm{range}(J(x) - I_n) = \mathrm{range}(\nabla c(x))$. As a result, we can conclude that
    \begin{equation*}
        0 \in \nabla \psimu(x) + \mathrm{range}(\nabla c(x)).
    \end{equation*}
    Hence $x$ is a first-order stationary point of \eqref{Prob_FBE}. This completes the proof. 
\end{proof}

In the following theorem, we prove that in a neighborhood of $\X \cap \M$, any first-order stationary point of \eqref{Prob_FBE} is a first-order stationary point of \eqref{Prob_Ori}. 
\begin{theo}
    \label{Theo_Equivalence_FBE}
    Suppose Assumption \ref{Assumption_f} and Assumption \ref{Assumption_Q} hold. Then for any $\mu \leq \mu_{\max}$, if  $x \in \Y $ is a first-order stationary point of \eqref{Prob_FBE}, then $x$ is a first-order stationary point of \eqref{Prob_Ori}. 
\end{theo}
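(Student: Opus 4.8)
The plan is to show that any first-order stationary point $x\in\Y$ of \eqref{Prob_FBE} must in fact satisfy $\Tmu(x)=x$; this forces $x\in\X$, after which the stationarity of \eqref{Prob_Ori} falls out of the optimality condition of the proximal subproblem. First I unpack the hypothesis: by the definition of a first-order stationary point of the equality-constrained problem \eqref{Prob_FBE}, $\nabla\psimu(x)\in\mathrm{range}(\Jc(x))$. Substituting the gradient formula of Lemma~\ref{Le_FBE_gradient},
\[
\nabla\psimu(x)=\tfrac{1}{\mu}(I_n-\mu H(x))(x-\Tmu(x))+(I_n-J(x))\nabla f(x),
\]
and noting that $I_n-J(x)=\Jc(x)(\Jc(x)\tp Q(x)\Jc(x)+\tau(x)I_p)^{-1}\Jc(x)\tp Q(x)$ (so the second summand already lies in $\mathrm{range}(\Jc(x))$), I obtain the key inclusion
\[
(I_n-\mu H(x))(x-\Tmu(x))\in\mathrm{range}(\Jc(x)).
\]

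Next I estimate $x-\Tmu(x)$ by splitting it along the tangent and normal spaces of $\M$ at $x$. Let $P$ and $P^{\perp}$ be the orthogonal projections onto $\mathrm{null}(\Jc(x)\tp)$ and $\mathrm{range}(\Jc(x))$, respectively. Applying $P$ to the key inclusion kills the $\mathrm{range}(\Jc(x))$ term and gives $P(x-\Tmu(x))=\mu\,P(H(x)(x-\Tmu(x)))$, hence $\norm{P(x-\Tmu(x))}\le\mu M_H\norm{x-\Tmu(x)}$. For the normal part I invoke Lemma~\ref{Le_FBE_error_bound_cQc}, which yields $\norm{\Jc(x)\tp Q(\Tmu(x))(x-\Tmu(x))}\le 2\mu M_cL_QM_JM_f\norm{x-\Tmu(x)}$; writing $P^{\perp}(x-\Tmu(x))=\Jc(x)\zeta$ and subtracting the already-controlled tangential contribution $\Jc(x)\tp Q(\Tmu(x))P(x-\Tmu(x))$ (of size $\le M_cM_Q\mu M_H\norm{x-\Tmu(x)}$) leaves $\norm{\Jc(x)\tp Q(\Tmu(x))\Jc(x)\,\zeta}\le(2M_cL_QM_JM_f+M_cM_QM_H)\mu\norm{x-\Tmu(x)}$. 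Since $\Jc(x)\tp Q(\Tmu(x))\Jc(x)\succeq\tfrac{\sigma_Q}{4}I_p$ — which follows, exactly as in the proof of Lemma~\ref{Le_aux_1}, from $\Jc(x)\tp Q(x)\Jc(x)\succeq\tfrac{\sigma_Q}{2}I_p$ on a neighbourhood of $\X\cap\M$ containing $\Y$, the bound $\norm{\Tmu(x)-x}\le\mu M_JM_f+\rho$ of Lemma~\ref{Le_FBE_Tx_x}, and $\rho\le\tfrac{\sigma_Q}{8M_c^2L_Q}$ — I can solve for $\norm{\zeta}$ and get $\norm{P^{\perp}(x-\Tmu(x))}=\norm{\Jc(x)\zeta}\le C\mu\norm{x-\Tmu(x)}$ with $C$ an explicit constant in $M_c,M_Q,L_Q,M_J,M_f,M_H,\sigma_Q$. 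Adding the two estimates gives $\norm{x-\Tmu(x)}\le(C+M_H)\mu\norm{x-\Tmu(x)}$, and the term $\tfrac{\sigma_Q}{16M_c^2L_QM_JM_f+(16M_c^2M_Q+4\sigma_Q)M_H}$ in the definition of $\mu_{\max}$ is precisely what guarantees $(C+M_H)\mu<1$ for $\mu\le\mu_{\max}$; hence $\Tmu(x)=x$.

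With $\Tmu(x)=x$ in hand, the identity $\Tmu(x)=\Pi_{\X}(x-\mu J(x)\nabla f(x))$ gives $x\in\X$, and since $x\in\Y\subseteq\M$ we conclude $x\in\X\cap\M$, so $x$ is feasible for \eqref{Prob_Ori}. The optimality condition \eqref{Eq_optim_Tmu} (valid because $x\in\X$) now reads $0\in\tfrac{1}{\mu}(\Tmu(x)-x)+J(x)\nabla f(x)+\NX(\Tmu(x))=J(x)\nabla f(x)+\NX(x)$. Writing $J(x)\nabla f(x)=\nabla f(x)-(I_n-J(x))\nabla f(x)$ and using once more that $(I_n-J(x))\nabla f(x)\in\mathrm{range}(\Jc(x))$, this gives $0\in\nabla f(x)+\mathrm{range}(\Jc(x))+\NX(x)$, i.e.\ $x$ is a first-order stationary point of \eqref{Prob_Ori} in the sense of Definition~\ref{Defin_FOSP}.

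The main obstacle is the control of the normal component $P^{\perp}(x-\Tmu(x))$: unlike the tangential component, it cannot be made small from the stationarity inclusion alone and requires coupling Lemma~\ref{Le_FBE_error_bound_cQc} with a uniform positive-definite lower bound on $\Jc(x)\tp Q(\Tmu(x))\Jc(x)$ over $\Y$. Securing that lower bound is where one must carefully track that $\Y$ sits in a small enough neighbourhood of $\X\cap\M$ and that all the smallness requirements encoded in $\rho$ and $\mu_{\max}$ (Tables~\ref{Simbol_Definition}--\ref{Simbol_Definition_2}) are in force, and then keeping the resulting contraction constant strictly below $1$ is the delicate bookkeeping step.
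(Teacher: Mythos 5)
Your proof is correct and takes essentially the same route as the paper's: the same stationarity relation from Lemma \ref{Le_FBE_gradient}, the same multiplication by $\nabla c(x)^\top Q(\Tmu(x))$ combined with Lemma \ref{Le_FBE_error_bound_cQc} and the $\tfrac{\sigma_Q}{4}$ lower bound on $\nabla c(x)^\top Q(\Tmu(x))\nabla c(x)$, and the same use of $\mu\le\mu_{\max}$ to force $\Tmu(x)=x$, after which feasibility and first-order stationarity for \eqref{Prob_Ori} follow exactly as in the paper. The only difference is bookkeeping: you split $x-\Tmu(x)$ into its components in $\mathrm{null}(\nabla c(x)^\top)$ and $\mathrm{range}(\nabla c(x))$ and obtain a contraction $(C+M_H)\mu<1$, whereas the paper derives matching lower and upper bounds on the multiplier $\lambda$ — two equivalent organizations of the same estimates.
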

\begin{proof}
    For any $x \in \Y $ that is a first-order stationary point of \eqref{Prob_FBE}, we first prove that $\norm{\Tmu(x)-x}=0$. From the optimality condition of \eqref{Prob_FBE}, there exists $\lambda \in \Rp$ such that 
    \begin{equation}
        \label{Eq_Theo_Equivalence_FBE_0}
        0 = \frac{1}{\mu}(I_n - \mu H(x))(x - \Tmu(x)) + \Jc(x)\lambda. 
    \end{equation}
    Therefore, we can conclude that 
    \begin{equation*}
        \begin{aligned}
            &\frac{1}{\mu} \norm{\Tmu(x)-x} - \norm{\Jc(x) \lambda} \leq  \norm{\frac{1}{\mu}(\Tmu(x)-x) + \Jc(x)\lambda } \\
            ={}& \norm{H(x)(\Tmu(x) - x)} \leq  M_H \norm{\Tmu(x)-x}. 
        \end{aligned}
    \end{equation*}
    Hence
    \begin{equation}
        \label{Eq_Theo_Equivalence_FBE_1}
        \norm{\lambda} \geq \frac{1}{M_c}\norm{\Jc(x) \lambda} \geq \frac{1}{M_c}\left(\frac{1}{\mu} - M_{H}\right) \norm{\Tmu(x)-x}.
    \end{equation}

    On the other hand, by multiplying $\nabla c(x)\tp Q(\Tmu(x)) $ to both sides of \eqref{Eq_Theo_Equivalence_FBE_0}, it follows from Lemma \ref{Le_FBE_error_bound_cQc} that 
    \begin{equation}
        \label{Eq_Theo_Equivalence_FBE_2}
        \begin{aligned}
            &\norm{\nabla c(x)\tp Q(\Tmu(x)) \nabla c(x) \lambda} = \norm{\nabla c(x)\tp Q(\Tmu(x)) \left( \frac{1}{\mu}(I_n - \mu H(x))(x - \Tmu(x)) \right)} \\
            \leq{}& \norm{\frac{1}{\mu}\nabla c(x)\tp Q(\Tmu(x)) (\Tmu(x)-x) } +  M_{c} M_{Q} M_{H} \norm{\Tmu(x)-x} \\
            \leq{}& (2M_{c} L_{Q}  M_{f} M_{J}+ M_{c} M_{Q}  M_{H}) \norm{\Tmu(x)-x}.
        \end{aligned}
    \end{equation}
    
    From the proof of \eqref{eq-3.2}, we have that 
    $\sigma_{\min}(\nabla c(x)\tp Q(x) \nabla c(x)) \geq\frac{\sigma_{Q}}{2}$ for 
    $x\in \Y$. Then together with Lemma \ref{Le_FBE_Tx_x}, we have 
    \begin{equation}
        \label{Eq_Theo_Equivalence_FBE_3}
        \begin{aligned}
            &\sigma_{\min}(\nabla c(x)\tp Q(\Tmu(x)) \nabla c(x)) \geq \sigma_{\min}(\nabla c(x)\tp Q(x) \nabla c(x)) - L_Q M_c^2 \norm{\Tmu(x) - x}\\
            \geq{}& \frac{\sigma_{Q}}{2} -   L_Q M_c^2(M_J M_f \mu + \rho) \geq \frac{\sigma_{Q}}{4}. 
        \end{aligned}
    \end{equation}
    In deriving the last inequality, we used the fact that 
    $\mu\leq \frac{\rho}{64M_JM_f}$ and $\rho\leq \frac{\sigma_Q}{8  L_Q M_c^2}$.
    Substituting the inequality \eqref{Eq_Theo_Equivalence_FBE_3} into \eqref{Eq_Theo_Equivalence_FBE_2}, it holds that 
    \begin{equation*}
        \frac{\sigma_{Q}}{4} \norm{\lambda} \leq \norm{\nabla c(x)\tp Q(\Tmu(x)) \nabla c(x) \lambda} \leq (2M_{c} L_{Q} M_{J} M_{f} + M_{c} M_{Q}  M_{H}) \norm{\Tmu(x)-x}.
    \end{equation*}
    As a result, we can conclude that 
    \begin{equation*}
        \frac{\sigma_{Q}}{4M_c}\left(\frac{1}{\mu} - M_{H}\right) \norm{\Tmu(x)-x} \leq \frac{\sigma_{Q}}{4}  \norm{\lambda} \leq (2M_{c} L_{Q} M_{J} M_{f} + M_{c} M_{Q}  M_{H}) \norm{\Tmu(x)-x}. 
    \end{equation*}
    Notice that the choice of $\mu_{\max}$ guarantees that $\mu \leq \frac{1}{4M_H}$ and $\mu \leq \frac{\sigma_Q}{16  M_c^2L_Q M_J M_f  + 8M_c^2M_Q M_H }$. Thus
    \begin{equation*}
        \frac{\sigma_{Q}}{4M_c}\left(\frac{1}{\mu} - M_{H}\right) >\frac{\sigma_Q}{8M_c \mu} \geq (2M_{c} L_{Q} M_{J} M_{f} + M_c M_{Q}  M_{H}),
    \end{equation*}
    hence we can conclude that $\norm{\Tmu(x)-x} = 0$. This implies the feasibility of $x$, in the sense that $x = \Tmu(x) \in \X \cap \M$. Then from the expression of $\nabla \psimu$ in Lemma \ref{Le_FBE_gradient}, it holds that 
    \begin{equation*}
        0 \in J(x) \nabla f(x) + \NX(x) \subseteq \nabla f(x) + \mathrm{range}(\Jc(x)) + \NX(x).
    \end{equation*}
    Therefore, $x$ is a first-order stationary point of \eqref{Prob_Ori}. This completes the proof. 
\end{proof}

The equivalence between \eqref{Prob_Ori} and \eqref{Prob_FBE} demonstrates the exactness of our proposed forward-backward semi-envelope. In the rest of this subsection, we aim to establish the relationships between \eqref{Prob_Ori} and \eqref{Prob_FBE} in the aspect of their $\varepsilon$-first-order stationary points. 

\begin{prop}
    \label{Prop_proj_Tx_x_esti}
     Suppose Assumption \ref{Assumption_f} and Assumption \ref{Assumption_Q} hold. Then for any $\mu \leq \mu_{\max}$ and any $x \in  \Y$, it holds that 
    \begin{equation*}
        \norm{(I_n - \Jc(x) \Jc(x)^{\dagger})(\Tmu(x)-x)} \geq \frac{\sigma_{Q}}{ 8M_Q M_c^2 + 2\sigma_{Q} } \norm{\Tmu(x)-x}. 
    \end{equation*}
\end{prop}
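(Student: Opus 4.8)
Fix $\mu\le\mu_{\max}$ and $x\in\Y$, set $v:=\Tmu(x)-x$, and let $P:=\Jc(x)\Jc(x)^{\dagger}$. By the defining properties of the pseudo-inverse, $P$ is the orthogonal projection onto $\mathrm{range}(\Jc(x))$, so $I_n-P$ is the orthogonal projection onto $\mathrm{null}(\Jc(x)\tp)$; writing $b:=Pv$ and $a:=(I_n-P)v=(I_n-\Jc(x)\Jc(x)^{\dagger})(\Tmu(x)-x)$, we have $v=a+b$ and $\norm{v}\le\norm{a}+\norm{b}$. Since $\norm{a}$ is exactly the quantity to be bounded below, the strategy is to bound $\norm{b}$ above by a small multiple of $\norm{v}$ plus a multiple of $\norm{a}$, and then close the estimate using $\norm{v}\le\norm{a}+\norm{b}$ and the smallness of $\mu$. (Note that the cruder route $\norm{a}^2=\norm{v}^2-\norm{b}^2$ would only give a $\sqrt{1-\beta^2}$-type constant, not the stated rational one, so the triangle inequality is the right tool here.)

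To bound $\norm{b}$, I would first control $\Jc(x)\tp Q(\Tmu(x))b$. Taking transposes in Lemma \ref{Le_FBE_error_bound_cQc} and using the symmetry of $Q(\Tmu(x))$ gives $\norm{\Jc(x)\tp Q(\Tmu(x))v}\le 2\mu M_cL_QM_JM_f\norm{v}$; since $b=v-a$, $\Tmu(x)\in\X\subseteq\W$ (so $\norm{Q(\Tmu(x))}\le M_Q$), and $\norm{\Jc(x)}\le M_c$ on $\Y\subseteq\W$, it follows that
\[
\norm{\Jc(x)\tp Q(\Tmu(x))b}\le 2\mu M_cL_QM_JM_f\norm{v}+M_cM_Q\norm{a}.
\]
Next, because $b\in\mathrm{range}(\Jc(x))$ I write $b=\Jc(x)w$ with $w=\Jc(x)^{\dagger}v$, so that $\Jc(x)\tp Q(\Tmu(x))\Jc(x)\,w=\Jc(x)\tp Q(\Tmu(x))b$. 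By \eqref{Eq_Theo_Equivalence_FBE_3} (from the proof of Theorem \ref{Theo_Equivalence_FBE}) we have $\sigma_{\min}\big(\Jc(x)\tp Q(\Tmu(x))\Jc(x)\big)\ge\sigma_Q/4$ for $x\in\Y$, hence $\norm{w}\le\frac{4}{\sigma_Q}\norm{\Jc(x)\tp Q(\Tmu(x))b}$, and therefore
\[
\norm{b}=\norm{\Jc(x)w}\le M_c\norm{w}\le\frac{4M_c}{\sigma_Q}\big(2\mu M_cL_QM_JM_f\norm{v}+M_cM_Q\norm{a}\big).
\]

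Finally I would combine these. Plugging the last display into $\norm{v}\le\norm{a}+\norm{b}$ gives $\norm{v}\le\norm{a}+\tfrac{8\mu M_c^2L_QM_JM_f}{\sigma_Q}\norm{v}+\tfrac{4M_c^2M_Q}{\sigma_Q}\norm{a}$. Since $\mu\le\mu_{\max}\le\frac{\sigma_Q}{16M_c^2L_QM_JM_f}$ (the second term in the definition of $\mu_{\max}$), we have $\tfrac{8\mu M_c^2L_QM_JM_f}{\sigma_Q}\le\tfrac12$, so moving that term to the left yields $\tfrac12\norm{v}\le\big(1+\tfrac{4M_c^2M_Q}{\sigma_Q}\big)\norm{a}$, which rearranges to $\norm{a}\ge\frac{\sigma_Q}{2\sigma_Q+8M_QM_c^2}\norm{v}$, i.e.\ the claimed inequality. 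The main obstacle is the middle step: passing from a bound on $\Jc(x)\tp Q(\Tmu(x))b$ to a bound on $\norm{b}$ without introducing an uncontrolled factor $\sigma_{\min}(\Jc(x))^{-1}$. The resolution is to route the estimate through the $Q(\Tmu(x))$-weighted Gram matrix $\Jc(x)\tp Q(\Tmu(x))\Jc(x)$, whose smallest singular value is already pinned below by $\sigma_Q/4$ on $\Y$ in \eqref{Eq_Theo_Equivalence_FBE_3}; a secondary point needing care is that the error term inherited from Lemma \ref{Le_FBE_error_bound_cQc} is only $\mathcal{O}(\mu)$ times $\norm{v}$, so it can be absorbed into the left-hand side of the final inequality precisely because the smallness of $\mu$ enforced by $\mu_{\max}$ makes its coefficient at most $\tfrac12$.
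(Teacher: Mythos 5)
Your proof is correct and takes essentially the same route as the paper's: both decompose $\Tmu(x)-x$ via the orthogonal projection $\Jc(x)\Jc(x)^{\dagger}$, combine Lemma \ref{Le_FBE_error_bound_cQc} with the bound $\sigma_{\min}\big(\Jc(x)\tp Q(\Tmu(x))\Jc(x)\big)\ge\sigma_Q/4$ from \eqref{Eq_Theo_Equivalence_FBE_3} to control the range component through $\Jc(x)^{\dagger}(\Tmu(x)-x)$, and absorb the $\mathcal{O}(\mu)$ term using $\mu\le\frac{\sigma_Q}{16M_c^2L_QM_JM_f}$. The only difference is organizational: the paper sandwiches $\norm{\Jc(x)\tp Q(\Tmu(x))(I_n-\Jc(x)\Jc(x)^{\dagger})(\Tmu(x)-x)}$ between an upper and a lower bound instead of bounding the range component directly, but the constants and final algebra coincide.
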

\begin{proof}
    For any $x \in \Y$, when $\Tmu(x) - x = 0$, then this proposition holds trivially. Therefore, we only consider the case where $\Tmu(x) - x \neq 0$. 
    
    Let $\lambda = \Jc(x)^{\dagger} (\Tmu(x)-x)$ and $\chi =  \frac{\norm{\Tmu(x)-x - \Jc(x)\lambda}}{\norm{\Tmu(x)-x}}$. Then it holds that 
    \begin{equation}
        \label{Eq_Prop_proj_Tx_x_esti_0}
        \begin{aligned}
            &\norm{\nabla c(x)\tp Q(\Tmu(x))(\Tmu(x)-x - \Jc(x)\lambda)} \\
            \leq{}& \norm{\nabla c(x)\tp Q(\Tmu(x))} \norm{\Tmu(x)-x - \Jc(x)\lambda} \\
            \leq{}& \chi M_{Q} M_{c} \norm{\Tmu(x)-x}. 
        \end{aligned}
    \end{equation}
    Additionally, it follows from Lemma \ref{Le_FBE_error_bound_cQc} that 
    \begin{equation}
        \label{Eq_Prop_proj_Tx_x_esti_1}
        \begin{aligned}
            &\norm{\nabla c(x)\tp Q(\Tmu(x))(\Tmu(x)-x - \Jc(x)\lambda)}\\
            \geq{}& \norm{\nabla c(x)\tp Q(\Tmu(x)) \Jc(x)\lambda} - \norm{\nabla c(x)\tp Q(\Tmu(x))(\Tmu(x)-x)}\\
            \geq{}&\frac{\sigma_{Q}}{4} \norm{\lambda} - 2\mu M_{c} L_{Q} M_{J} M_{f} \norm{\Tmu(x)-x}.
        \end{aligned}
    \end{equation}
    Here, the last inequality follows from \eqref{Eq_Theo_Equivalence_FBE_3}. By combining \eqref{Eq_Prop_proj_Tx_x_esti_0} and \eqref{Eq_Prop_proj_Tx_x_esti_1} together, we arrive at 
    \begin{equation}
        \label{Eq_Prop_proj_Tx_x_esti_2}
        \norm{\lambda} \leq \frac{8\mu M_{c} L_{Q} M_{J} M_{f} + 4\chi M_{Q} M_{c}}{\sigma_{Q}} \norm{\Tmu(x)-x}.
    \end{equation}

    Moreover, it directly follows from the definition of $\chi$ that 
    \begin{equation*}
        \chi \norm{\Tmu(x)-x} = \norm{(\Tmu(x)-x) - \Jc(x)\lambda} \geq \norm{\Tmu(x)-x} - M_c \norm{\lambda}, 
    \end{equation*}
    which leads to the fact that 
    \begin{equation*}
        \begin{aligned}
            &\frac{1-\chi}{M_c} \norm{\Tmu(x)-x} \leq \norm{\lambda} \leq  \frac{8\mu M_{c} L_{Q} M_{J} M_{f} + 4\chi M_{Q} M_{c}}{\sigma_{Q}} \norm{\Tmu(x)-x}\\
            \leq{}& \left(\frac{1}{2M_c} + \frac{4\chi M_Q M_c}{\sigma_{Q}} \right)\norm{\Tmu(x)-x}.
        \end{aligned}
    \end{equation*}
    Here, the second inequality directly follows from \eqref{Eq_Prop_proj_Tx_x_esti_2}, and the third inequality uses the fact that $\mu\leq \frac{\sigma_Q}{16 M_c^2 L_Q M_J M_f}$.
    As a result, we can conclude that 
    \begin{equation*}
        \left(\frac{4M_Q M_c^2}{\sigma_{Q}} + 1\right) \chi \geq \frac{1}{2},
    \end{equation*}
    which illustrates that $\chi \geq \frac{\sigma_{Q}}{ 8M_Q M_c^2 + 2\sigma_{Q} }$. This completes the proof. 
\end{proof}

In the following theorem, we prove that for any $x \in  \Y$ that is an $\varepsilon$-stationary point of \eqref{Prob_FBE}, $\Tmu(x)$ is an $\ca{O}(\varepsilon)$-stationary point of \eqref{Prob_Ori}. 
\begin{theo}
    \label{Theo_Equivalence_FBE_approx}
     Suppose Assumption \ref{Assumption_f} and Assumption \ref{Assumption_Q} hold. Then for any $\mu \leq \mu_{\max}$ and any $x \in \Y$, when $\norm{(I_n - \Jc(x) \Jc(x)^{\dagger})\nabla \psimu(x)} \leq \varepsilon$, it holds that 
     \begin{equation}
        \left\{
         \begin{aligned}
             &\norm{c(\Tmu(x))} \leq \frac{ 16M_Q M_c^3 + 4\sigma_{Q} M_c }{\sigma_{Q}} \mu \varepsilon, \\
             &\mathrm{dist}\left(0,  \nabla f(\Tmu(x)) + \mathrm{range}(\Jc(\Tmu(x))) + \NX(\Tmu(x)) \right) \leq \frac{ 32M_Q M_c^3 + 8\sigma_{Q} M_c }{\sigma_{Q} } \varepsilon.
         \end{aligned}
         \right.
     \end{equation}
\end{theo}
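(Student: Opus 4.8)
The plan is to start from the near-stationarity hypothesis $\norm{(I_n - \Jc(x)\Jc(x)^\dagger)\nabla\psimu(x)} \le \varepsilon$ and use it to bound $\norm{\Tmu(x)-x}$, which then controls both the feasibility residual $\norm{c(\Tmu(x))}$ and the first-order stationarity residual of \eqref{Prob_Ori} at $\Tmu(x)$. The key link is the gradient formula from Lemma \ref{Le_FBE_gradient}, namely $\nabla\psimu(x) = \frac{1}{\mu}(I_n-\mu H(x))(x-\Tmu(x)) + (I_n-J(x))\nabla f(x)$. Since $\mathrm{range}(I_n - J(x)) = \mathrm{range}(\Jc(x))$ (as used in Proposition \ref{Prop_equivalence_FBE}), the term $(I_n-J(x))\nabla f(x)$ is annihilated by the projector $I_n - \Jc(x)\Jc(x)^\dagger$. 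Hence $(I_n-\Jc(x)\Jc(x)^\dagger)\nabla\psimu(x) = \frac{1}{\mu}(I_n-\Jc(x)\Jc(x)^\dagger)(I_n-\mu H(x))(x-\Tmu(x))$.

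\medskip
First I would lower-bound the right-hand side. Write $(I_n-\mu H(x))(x-\Tmu(x)) = (x-\Tmu(x)) - \mu H(x)(x-\Tmu(x))$; applying the projector and using $\norm{H(x)}\le M_H$ on $\Y$ gives
\[
\norm{(I_n-\Jc(x)\Jc(x)^\dagger)\nabla\psimu(x)} \ge \frac{1}{\mu}\norm{(I_n-\Jc(x)\Jc(x)^\dagger)(x-\Tmu(x))} - M_H\norm{x-\Tmu(x)}.
\]
Now Proposition \ref{Prop_proj_Tx_x_esti} gives $\norm{(I_n-\Jc(x)\Jc(x)^\dagger)(\Tmu(x)-x)} \ge \frac{\sigma_Q}{8M_Q M_c^2 + 2\sigma_Q}\norm{\Tmu(x)-x}$, so combining these and using $\mu \le \mu_{\max} \le \frac{1}{4M_H}$ (so that the $M_H$ term is dominated, e.g. $\frac{1}{\mu}\cdot\frac{\sigma_Q}{8M_QM_c^2+2\sigma_Q} - M_H \ge \frac{1}{2\mu}\cdot\frac{\sigma_Q}{8M_QM_c^2+2\sigma_Q}$, after checking $\mu_{\max}$ is small enough) yields
\[
\norm{\Tmu(x)-x} \le \frac{2(8M_Q M_c^2 + 2\sigma_Q)}{\sigma_Q}\,\mu\varepsilon = \frac{16M_Q M_c^2 + 4\sigma_Q}{\sigma_Q}\,\mu\varepsilon.
\]

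\medskip
From here the two claimed bounds follow by propagation-of-error arguments. For the feasibility residual: $c(x) = 0$ would hold if $x\in\M$, but since $x\in\Y\subseteq\M$ by definition of $\Y$, actually $c(x)=0$; hence $\norm{c(\Tmu(x))} = \norm{c(\Tmu(x)) - c(x)} \le M_c\norm{\Tmu(x)-x} \le \frac{16M_QM_c^3 + 4\sigma_Q M_c}{\sigma_Q}\mu\varepsilon$, using that $\nabla c$ has operator norm $\le M_c$ on $\W\supseteq\Y$ and that $\Tmu(x)\in\W$ (from $\norm{\Tmu(x)-x}\le\rho\le 1$). For the stationarity residual: since $x$ is a first-order stationary point situation gives (from Lemma \ref{Le_FBE_gradient} and the stationarity condition) $0\in J(x)\nabla f(x) + \NX(\Tmu(x))$ up to error $\frac1\mu\norm{(I_n-\mu H(x))}\norm{x-\Tmu(x)}$ — more precisely, from the optimality condition \eqref{Eq_optim_Tmu}, $0 \in \frac1\mu(\Tmu(x)-x) + J(x)\nabla f(x) + \NX(\Tmu(x))$, so $\mathrm{dist}(0, J(x)\nabla f(x) + \NX(\Tmu(x))) \le \frac1\mu\norm{\Tmu(x)-x}$. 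Then $J(x)\nabla f(x) = \nabla f(x) + (J(x)-I_n)\nabla f(x) \in \nabla f(x) + \mathrm{range}(\Jc(x))$, and one replaces $x$ by $\Tmu(x)$ inside $\nabla f$, $\Jc$, $\NX$ at the cost of $\mathcal{O}(\norm{\Tmu(x)-x})$ using $L_f$-Lipschitzness of $\nabla f$ and continuity/closedness of the normal-cone-plus-range set; collecting the $\frac1\mu$ term (dominant) with these gives the bound $\frac{32M_QM_c^3 + 8\sigma_Q M_c}{\sigma_Q}\varepsilon$, the factor $2$ over the feasibility constant absorbing the lower-order perturbations. The main obstacle is the bookkeeping in this last step: carefully showing the normal-cone/range terms perturb continuously under the shift $x\mapsto\Tmu(x)$ and verifying that the chosen $\mu_{\max}$ makes all lower-order terms fit under the stated constants without any extra slack.
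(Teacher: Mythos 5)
Your first two steps coincide with the paper's proof: using Lemma \ref{Le_FBE_gradient} together with $\mathrm{range}(I_n-J(x))\subseteq\mathrm{range}(\Jc(x))$ (so the projector annihilates $(I_n-J(x))\nabla f(x)$), then Proposition \ref{Prop_proj_Tx_x_esti} and the smallness of $\mu\le\mu_{\max}$ to get $\norm{\Tmu(x)-x}\le\frac{16M_QM_c^2+4\sigma_Q}{\sigma_Q}\mu\varepsilon$, and then $c(x)=0$ (since $x\in\Y\subseteq\M$) with the Lipschitz bound $M_c$ for the feasibility estimate. That part is correct and essentially identical to the paper.

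The gap is in the stationarity step. After obtaining $\mathrm{dist}\big(0,\,J(x)\nabla f(x)+\NX(\Tmu(x))\big)\le\frac1\mu\norm{\Tmu(x)-x}$ from \eqref{Eq_optim_Tmu}, you decompose $J(x)\nabla f(x)=\nabla f(x)+(J(x)-I_n)\nabla f(x)$ at the point $x$ and propose to ``replace $x$ by $\Tmu(x)$ inside $\nabla f$, $\Jc$, $\NX$'' by appealing to continuity of the normal-cone-plus-range set. Two problems: first, the normal cone needs no replacement at all, since \eqref{Eq_optim_Tmu} already involves $\NX(\Tmu(x))$; second, and more seriously, replacing $\mathrm{range}(\Jc(x))$ by $\mathrm{range}(\Jc(\Tmu(x)))$ is not a soft continuity fact — a subspace-valued map gives no distance bound for free. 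To make it quantitative you would have to exhibit the multiplier, e.g. $\lambda(x)=-(\Jc(x)\tp Q(x)\Jc(x)+\tau(x)I_p)^{-1}\Jc(x)\tp Q(x)\nabla f(x)$, bound $\norm{\lambda(x)}$, pay an extra error of order $L_c\norm{\lambda(x)}\norm{\Tmu(x)-x}$, and verify it fits under the factor-of-two slack in the stated constant; you do none of this and explicitly flag it as the unresolved obstacle. The paper avoids the issue entirely: it keeps the vector $J(x)\nabla f(x)$ intact, bounds $\norm{J(x)\nabla f(x)-J(\Tmu(x))\nabla f(\Tmu(x))}$ by the Lipschitz constant of the single map $y\mapsto J(y)\nabla f(y)$ (controlled through $H$ and absorbed via $\mu\le\frac{1}{4L_H}$, giving the clean bound $\frac{2}{\mu}\norm{\Tmu(x)-x}$), and only then uses the \emph{exact} inclusion $(J(y)-I_n)\nabla f(y)\in\mathrm{range}(\Jc(y))$ at $y=\Tmu(x)$, so no subspace perturbation is ever required. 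Rewriting your last step along these lines closes the gap. (Also, the phrase ``since $x$ is a first-order stationary point'' is inaccurate — $x$ is only approximately stationary — though your subsequent use of \eqref{Eq_optim_Tmu} does not actually rely on it.)
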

\begin{proof}
    For any $x \in \Y$ that satisfies $\norm{(I_n - \Jc(x) \Jc(x)^{\dagger})\nabla \psimu(x)} \leq \varepsilon$, it holds that 
    \begin{equation*}
        \begin{aligned}
            &\varepsilon \geq \norm{(I_n - \Jc(x) \Jc(x)^{\dagger})\nabla \psimu(x)} \\
            \geq{}& \frac{1}{\mu}\norm{(I_n - \Jc(x) \Jc(x)^{\dagger})(\Tmu(x) - x) - \mu (I_n - \Jc(x) \Jc(x)^{\dagger}) H(x) (\Tmu(x) - x)}\\
            \geq{}& \frac{1}{\mu}\norm{(I_n - \Jc(x) \Jc(x)^{\dagger})(\Tmu(x) - x)} - \norm{(I_n - \Jc(x) \Jc(x)^{\dagger}) H(x) (\Tmu(x) - x)}\\
            \geq{}& \frac{1}{\mu}\cdot \frac{\sigma_{Q}}{ 8M_Q M_c^2 + 2\sigma_{Q} } \norm{\Tmu(x)-x} - M_{H}\norm{\Tmu(x)-x}\\
            \geq{}& \frac{1}{\mu}\cdot \frac{\sigma_{Q}}{ 16 M_Q M_c^2 + 4 \sigma_{Q} } \norm{\Tmu(x)-x}.
        \end{aligned}
    \end{equation*}
    Here the second inequality follows from Lemma \ref{Le_FBE_gradient}, and the fourth inequality uses Proposition \ref{Prop_proj_Tx_x_esti}. Therefore, we can conclude that $ \norm{\Tmu(x)-x} \leq \frac{ 16M_Q M_c^2 + 4\sigma_{Q} }{\sigma_{Q}} \mu\varepsilon$. 
    Hence, the continuity of $c$ implies that 
    \begin{equation*}
        \norm{c(\Tmu(x))} \leq M_c \norm{\Tmu(x)-x} \leq \frac{ 16M_Q M_c^3 + 4\sigma_{Q} M_c }{\sigma_{Q}} \mu \varepsilon.
    \end{equation*}
    Furthermore, we have 
    \begin{equation}
        \label{Eq_Theo_Equivalence_FBE_approx_0}
        \begin{aligned}
            &\mathrm{dist}(0, J(\Tmu(x)) \nabla f(\Tmu(x)) + \NX(\Tmu(x))) \\
            \leq{}& \mathrm{dist}(0, J(x) \nabla f(x) + \NX(\Tmu(x)))  + \norm{J(x) \nabla f(x) - J(\Tmu(x)) \nabla f(\Tmu(x))}\\
            \leq{}& \mathrm{dist}(0, J(x) \nabla f(x) + \NX(\Tmu(x))) + L_H\norm{\Tmu(x) -x}\\
            \leq{}& \frac{1}{\mu} \norm{\Tmu(x) -x} + L_H\norm{\Tmu(x) -x} \\
            \leq{}& \frac{2}{\mu} \norm{\Tmu(x) -x} \leq \frac{ 32M_Q M_c^3 + 8\sigma_{Q} M_c }{\sigma_{Q} } \varepsilon. 
        \end{aligned}
    \end{equation}
    Here the second inequality follows from the definition of $H(x)$ and $L_H$ in Table \ref{Simbol_Definition_2}, and the third inequality uses the optimality condition of the forward-backward semi-envelope subproblem. The second last inequality used the fact that 
    $\mu\leq \frac{1}{4L_H}$.
    Therefore, we can conclude that 
    \begin{equation*}
        \mathrm{dist}\left(0,  \nabla f(\Tmu(x)) + \mathrm{range}(\Jc(\Tmu(x))) + \NX(\Tmu(x)) \right) \leq \frac{ 32M_Q M_c^3 + 8\sigma_{Q} M_c }{\sigma_{Q} } \varepsilon.
    \end{equation*}
    This completes the proof. 
\end{proof}

\subsection{A projected inexact gradient descent method for solving \eqref{Prob_FBE}}
\label{Subsection_algorithm}

In this subsection, we develop 
a projected inexact gradient descent method
for solving \eqref{Prob_FBE}. 

As demonstrated in the equivalence between \eqref{Prob_Ori} and \eqref{Prob_FBE} in Theorem \ref{Theo_Equivalence_FBE}, various existing approaches and their corresponding theoretical analysis can be directly implemented to solve \eqref{Prob_Ori} through \eqref{Prob_FBE}, such as the sequential quadratic programming \cite{curtis2015adaptive}, augmented Lagrangian method \cite{xie2021complexity}, etc.  When employing these methods, we need to compute $\nabla \psimu(x)$, hence requiring the computations of $H(x)$. It is worth mentioning that $H(x)$ has a closed-form expression and its computations can be achieved by successive application of the chain rule. However,  the computations of $H(x)$ involve additional computational costs apart from the computations of $\Tmu(x)$. 
Therefore, we consider the following projected gradient descent method for solving \eqref{Prob_FBE} that avoids the need to compute $H(x)$. Given $x_0\in\M$, at the $k$-th iteration, generate $x_{k+1}$ as follows:  
\begin{equation}
    \label{Eq_FBE_PGD}
    \begin{aligned}
        &g_k ={} \frac{1}{\mu}(I_n - \Jc(x_k)\Jc(x_k)^{\dagger})(\xk - \Tmu(\xk) )\\
        &\xkp ={} \Pi_{\M}(\xk - \eta_k g_k).
    \end{aligned}
\end{equation}
Note that in the above, $I_n - \Jc(x_k)\Jc(x_k)^{\dagger}$
is the projection onto ${\rm null}(\nabla c(x_k)^\top)$, which is the tangent space of $\M$ at $x_k.$ Hence $g_k \in {\rm null}(\nabla c(x_k)^\top)$.

We first present the following proposition illustrating that  $(I_n - \Jc(x)\Jc(x)^{\dagger})(\Tmu(x) - x)$ is a sufficient descent direction for $\psimu(x)$. 
\begin{prop}
    \label{Prop_FBE_P2GD_direction}
    Suppose Assumption \ref{Assumption_f} and Assumption \ref{Assumption_Q} hold. Then for any $\mu \leq \mu_{\max}$ and any $x \in \Y $, it holds that
    \begin{equation*}
        \inner{(I_n - \Jc(x)\Jc(x)^{\dagger})\nabla \psimu(x), \Tmu(x) - x } \leq - \frac{1}{2\mu} \left(\frac{\sigma_{Q}}{8 M_Q M_c^2 +  2\sigma_{Q}} \right)^2\norm{x - \Tmu(x)}^2.
    \end{equation*}
\end{prop}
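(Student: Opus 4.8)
The plan is to expand $\nabla\psimu(x)$ using the gradient formula from Lemma~\ref{Le_FBE_gradient} and then split the inner product into a ``main'' term coming from $\frac{1}{\mu}(x-\Tmu(x))$ and ``error'' terms coming from $H(x)$ and from $(I_n-J(x))\nabla f(x)$. Concretely, write
\begin{equation*}
\inner{(I_n - \Jc(x)\Jc(x)^{\dagger})\nabla \psimu(x), \Tmu(x) - x} = \frac{1}{\mu}\inner{(I_n - \Jc(x)\Jc(x)^{\dagger})(x-\Tmu(x)), \Tmu(x)-x} + R(x),
\end{equation*}
where $R(x)$ collects the contribution of $-H(x)(x-\Tmu(x))$ and of $(I_n-J(x))\nabla f(x)$ after projection. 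Since $I_n-\Jc(x)\Jc(x)^{\dagger}$ is the orthogonal projection onto $\mathrm{null}(\Jc(x)\tp)$, it is symmetric and idempotent, so the first term equals $-\frac{1}{\mu}\norm{(I_n-\Jc(x)\Jc(x)^{\dagger})(x-\Tmu(x))}^2$, and Proposition~\ref{Prop_proj_Tx_x_esti} lower-bounds this by $-\frac{1}{\mu}\big(\frac{\sigma_Q}{8M_QM_c^2+2\sigma_Q}\big)^2\norm{x-\Tmu(x)}^2$ --- wait, with the right sign it is an \emph{upper} bound of a negative quantity, which is exactly what we want. So the main term already delivers the full stated bound with constant $1$ rather than $\frac12$; the role of the $\frac12$ is to absorb $R(x)$.

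Next I would bound $R(x)$. The $H(x)$ part is $-\inner{(I_n-\Jc(x)\Jc(x)^{\dagger})H(x)(x-\Tmu(x)),\Tmu(x)-x}$, which by Cauchy--Schwarz and $\norm{H(x)}\le M_H$ is at most $M_H\norm{x-\Tmu(x)}^2$ in absolute value. For the $(I_n-J(x))\nabla f(x)$ part, recall from the proof of Proposition~\ref{Prop_equivalence_FBE} (or directly from Lemma~\ref{Le_aux_2}) that $\mathrm{range}(I_n-J(x))=\mathrm{range}(\Jc(x))$, indeed $I_n - J(x) = \Jc(x)(\Jc(x)\tp Q(x)\Jc(x)+\tau(x)I_p)^{-1}\Jc(x)\tp Q(x)$; hence $(I_n-\Jc(x)\Jc(x)^{\dagger})(I_n-J(x))\nabla f(x) = 0$ because $\Jc(x)\Jc(x)^{\dagger}$ fixes $\mathrm{range}(\Jc(x))$ pointwise. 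So the second error term vanishes identically, and $|R(x)|\le M_H\norm{x-\Tmu(x)}^2$.

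Finally I would combine the two estimates: the inner product is at most $-\frac{1}{\mu}c^2\norm{x-\Tmu(x)}^2 + M_H\norm{x-\Tmu(x)}^2$ where $c=\frac{\sigma_Q}{8M_QM_c^2+2\sigma_Q}$, and it remains to check that $M_H \le \frac{1}{2\mu}c^2$ for $\mu\le\mu_{\max}$, i.e.\ that $\mu_{\max}$ was chosen small enough that $\frac{1}{\mu}c^2 - M_H \ge \frac{1}{2\mu}c^2$. This is where one must unwind the definition of $\mu_{\max}$ in Table~\ref{Simbol_Definition_2}; presumably the bound $\mu_{\max}\le \frac{\sigma_Q}{16M_c^2M_QM_JM_f+(16M_c^2M_Q+4\sigma_Q)M_H}$ (or one of the similar-looking terms) is exactly the one engineered for this, giving $2\mu M_H \le c^2$ after a short computation with $c^2 = \sigma_Q^2/(8M_QM_c^2+2\sigma_Q)^2$. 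The main obstacle is thus purely bookkeeping: verifying that the relevant component of $\mu_{\max}$ dominates $2M_H/c^2$; the structural facts (symmetry/idempotency of the tangent projection, and $\mathrm{range}(I_n-J(x))\subseteq\mathrm{range}(\Jc(x))$ killing the $\nabla f$ error term) are the conceptual heart and are already available from earlier lemmas.
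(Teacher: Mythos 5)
Your outline follows the paper's route (gradient formula from Lemma \ref{Le_FBE_gradient}, annihilation of $(I_n-J(x))\nabla f(x)$ by the tangent projection since $\mathrm{range}(I_n-J(x))\subseteq\mathrm{range}(\Jc(x))$, and Proposition \ref{Prop_proj_Tx_x_esti} for the main term), but the final absorption step fails as written. Bounding the $H$-term crudely by $M_H\norm{x-\Tmu(x)}^2$ forces you to verify $2\mu M_H \leq c^2$ with $c = \frac{\sigma_Q}{8M_QM_c^2+2\sigma_Q}$, and the definition of $\mu_{\max}$ does \emph{not} deliver this. The relevant entry, $\mu \leq \frac{\sigma_Q}{16M_c^2L_QM_JM_f+(16M_c^2M_Q+4\sigma_Q)M_H}$, yields only $\mu M_H \leq \frac{\sigma_Q}{16M_c^2M_Q+4\sigma_Q} = \frac{c}{2}$; since $c<1$, this is strictly weaker than the required $\frac{c^2}{2}$, and the discrepancy is unbounded when $M_QM_c^2/\sigma_Q$ is large. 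No other entry of $\mu_{\max}$ helps (the entry $\frac{1}{4L_H+12M_H+4L_f}$ only gives $\mu M_H \leq \frac{1}{12}$). So the ``presumably engineered for this'' step is precisely where the argument breaks.

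The fix, which is what the paper actually does, is to keep the projected factor in the error estimate: using symmetry and idempotency of $P:=I_n-\Jc(x)\Jc(x)^{\dagger}$, the $H$-term equals $\inner{H(x)(x-\Tmu(x)),\,P(\Tmu(x)-x)}$ and is bounded by $M_H\,\norm{x-\Tmu(x)}\,\norm{P(x-\Tmu(x))}$ rather than $M_H\norm{x-\Tmu(x)}^2$. Writing $a=\norm{P(x-\Tmu(x))}$ and $t=\norm{x-\Tmu(x)}$, the whole inner product is at most $a\left(-\frac{a}{\mu}+M_H t\right)$; since $\mu M_H\leq \frac{c}{2}$ makes the bracket negative, Proposition \ref{Prop_proj_Tx_x_esti} ($a\geq ct$) gives the bound $ct\left(-\frac{ct}{\mu}+M_H t\right)\leq -\frac{c^2}{2\mu}t^2$, which requires only $\mu M_H\leq\frac{c}{2}$ --- exactly what $\mu_{\max}$ provides. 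With that sharper Cauchy--Schwarz your argument goes through and coincides with the paper's proof.
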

\begin{proof}
    For any $x \in \Y$, it follows from Lemma \ref{Le_FBE_gradient} that 
    \begin{equation*}
        \begin{aligned}
            &(I_n - \Jc(x)\Jc(x)^{\dagger})\nabla \psimu(x) \\
            ={}& \frac{1}{\mu}(I_n - \Jc(x)\Jc(x)^{\dagger}) (x - \Tmu(x)) -(I_n - \Jc(x)\Jc(x)^{\dagger}) H(x) (x - \Tmu(x)).
        \end{aligned}
    \end{equation*}
    As a result, we have
    \begin{equation*}
        \begin{aligned}
            &\inner{(I_n - \Jc(x)\Jc(x)^{\dagger})\nabla \psimu(x), \Tmu(x) - x } \\
            \leq{}& -\frac{1}{\mu} \inner{x-\Tmu(x), (I_n - \Jc(x)\Jc(x)^{\dagger}) (x-\Tmu(x))} \\
            & + \norm{ H(x) } \norm{x - \Tmu(x)} \norm{(I_n - \Jc(x)\Jc(x)^{\dagger})(x - \Tmu(x))}  \\
            \leq{}& -\frac{1}{\mu} \norm{(I_n - \Jc(x)\Jc(x)^{\dagger})(x - \Tmu(x))}^2 + M_H  \norm{x - \Tmu(x)} \norm{(I_n - \Jc(x)\Jc(x)^{\dagger})(x - \Tmu(x))}\\
            \leq{}&  \frac{\sigma_{Q}}{8 M_Q M_c^2 + 2 \sigma_{Q}} \left(- \frac{1}{\mu} \cdot \frac{\sigma_{Q}}{8 M_Q M_c^2 +  2\sigma_{Q}}  + M_H \right) \norm{x - \Tmu(x)}^2 \\
            \leq{}& - \frac{1}{2\mu} \left(\frac{\sigma_{Q}}{8 M_Q M_c^2 + 2 \sigma_{Q}} \right)^2\norm{x - \Tmu(x)}^2.
        \end{aligned}
    \end{equation*}
    Note that the second last inequality follows from Proposition~\ref{Prop_proj_Tx_x_esti} and the fact that $\mu\leq \frac{\sigma_Q}{M_H(16 M_Q M_c^2+4\sigma_Q)}$.
    This completes the proof. 
\end{proof}

In the following lemma, we estimate an upper bound for $\mathrm{dist}(x+d, \M)$ for any $x \in \Y $ and any $d \in \mathrm{null}(\Jc(x))\tp)$. 
\begin{lem}
    \label{Le_FBE_tangent_dist}
    Suppose Assumption \ref{Assumption_f} and Assumption \ref{Assumption_Q} hold. Then for any $x \in \Y $ and any $d \in \mathrm{null}(\Jc(x)^\top)$ with $\norm{d} \leq \frac{1}{4}\rho$, it holds that 
    \begin{equation*}
        \mathrm{dist}(x+d, \M) \leq \frac{L_c}{\sigma_c}\norm{d}^2.
    \end{equation*}
\end{lem}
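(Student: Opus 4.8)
The plan is to bound $\mathrm{dist}(x+d,\M)$ by constructing an explicit point on $\M$ near $x+d$, using a Newton-type correction along the normal space $\mathrm{range}(\Jc(x))$. Since $d \in \mathrm{null}(\Jc(x)^\top)$, we have $\Jc(x)^\top d = 0$, so $c(x+d) = c(x) + \Jc(x)^\top d + (\text{second-order term}) = O(\norm{d}^2)$ because $c(x)=0$ (as $x \in \Y \subseteq \M$); more precisely, Taylor expansion with the Lipschitz bound $L_c$ on $\nabla c$ over $\W$ gives $\norm{c(x+d)} \le \tfrac{L_c}{2}\norm{d}^2$. The idea is then to cancel this residual by moving in the direction $\Jc(x+d)$ (or $\Jc(x)$): look for $y = x + d + \Jc(x)\lambda$ with $c(y)=0$, and show such $\lambda$ exists with $\norm{\Jc(x)\lambda} = O(\norm{d}^2)$.

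Concretely, I would first verify that the linearized map $\lambda \mapsto \Jc(x)^\top \Jc(x)\lambda$ is invertible with $\sigma_{\min}(\Jc(x)^\top\Jc(x)) \ge \sigma_c^2/\text{const}$ for $x \in \Y$ — this follows from the definition of $\sigma_c$ and the Lipschitz bound $L_c$ together with $\rho \le \sigma_c/(8L_c)$, exactly as in the estimate \eqref{eq-3.2} for the $Q$-weighted version. Then I would set up a fixed-point/contraction argument (or invoke a quantitative inverse/implicit function theorem): define $\Phi(\lambda) := -(\Jc(x)^\top\Jc(x))^{-1}\Jc(x)^\top\big(c(x+d+\Jc(x)\lambda) - \Jc(x)^\top\Jc(x)\lambda\big)$, whose fixed points $\lambda$ satisfy $\Jc(x)^\top c(x+d+\Jc(x)\lambda)=0$; combined with $c(x+d+\Jc(x)\lambda) \in \mathrm{range}(\Jc(\cdot))$-type considerations one deduces $c(y)=0$. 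Showing $\Phi$ maps a small ball (radius $\sim \norm{d}^2 L_c/\sigma_c^2$) into itself and is a contraction there uses only the Lipschitz continuity of $\nabla c$ and the lower bound on $\sigma_{\min}(\Jc(x)^\top\Jc(x))$; the constraint $\norm{d}\le\tfrac14\rho$ ensures all iterates stay inside $\W$ where these constants are valid. The resulting fixed point gives $\mathrm{dist}(x+d,\M) \le \norm{\Jc(x)\lambda} \le M_c\norm{\lambda} \le \tfrac{L_c}{\sigma_c}\norm{d}^2$ after tracking constants carefully.

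The main obstacle I anticipate is the bookkeeping of constants to land exactly on the clean bound $\tfrac{L_c}{\sigma_c}\norm{d}^2$ rather than something like $C L_c \norm{d}^2/\sigma_c^2$ with an extra $M_c$ or $\sigma_c$ floating around; getting the sharp constant likely requires choosing the correction direction as $\Jc(x)(\Jc(x)^\top\Jc(x))^{-1}$ applied to the residual (i.e., using the pseudo-inverse $\Jc(x)^\dagger{}^\top$) and exploiting that $\norm{\Jc(x)\Jc(x)^\dagger}=1$, so that the geometry contributes only a factor $1/\sigma_{\min}(\Jc(x))\le \cdots$ cleanly. A secondary subtlety is justifying that a stationary point of the correction procedure actually lies on $\M$ — i.e., that killing the component of $c$ in $\mathrm{range}(\Jc(x)^\top)$ suffices — which needs the observation that for $y$ close to $x$, $c(y)$ is close to $\mathrm{range}(\Jc(y)^\top)=\mathbb{R}^p$ trivially, so really one should run the Newton correction to convergence (the contraction argument) and conclude $c(y)=0$ directly at the fixed point. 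Alternatively, and perhaps more cleanly, I would phrase the whole thing via the standard quantitative surjective-mapping lemma (Lyusternik–Graves / Hoffman-type bound for smooth maps) applied to $c$ at $x$, which immediately yields $\mathrm{dist}(x+d,\M) \le \sigma_{\min}(\Jc(x))^{-1}\norm{c(x+d)} \le \tfrac{L_c}{\sigma_c}\norm{d}^2$ once the second-order Taylor bound on $\norm{c(x+d)}$ is in hand.
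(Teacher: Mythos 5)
Your proposal is correct and follows essentially the same route as the paper: the identical second-order Taylor estimate $\norm{c(x+d)}\leq \tfrac{L_c}{2}\norm{d}^2$ (using $c(x)=0$ and $\Jc(x)\tp d=0$), followed by an error bound converting the residual into a distance to $\M$ — the only difference being that the paper imports this error bound, $\mathrm{dist}(x+d,\M)\leq \tfrac{2}{\sigma_c}\norm{c(x+d)}$, by citing \cite[Lemma 1]{xiao2023dissolving}, whereas you re-derive it via a Newton/contraction (Lyusternik--Graves) argument. The constant bookkeeping you worried about is resolved exactly by that factor-2 slack: $\tfrac{2}{\sigma_c}\cdot\tfrac{L_c}{2}=\tfrac{L_c}{\sigma_c}$, so you only need $\sigma_{\min}$ of the Jacobian near $x$ to stay above $\sigma_c/2$, not the sharp value.
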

\begin{proof}
    For any $x \in  \Y$ and any $d$, it holds that $x+d \in \Z$. Therefore, it holds that 
    \begin{equation*}
        \norm{c(x+d)} \leq \norm{c(x) + \Jc(x)\tp d} + \frac{L_c}{2} \norm{d}^2 = \frac{L_c}{2} \norm{d}^2.
    \end{equation*}
    Furthermore, let $z \in \Pi_{\M}(x + d)$, then similar to the proof  as \cite[Lemma 1]{xiao2023dissolving}, 
    it holds that $\norm{c(x+d)} \geq \frac{\sigma_c}{2} \norm{x+d-z} = \frac{\sigma_c}{2} \mathrm{dist}(x+d, \M)$. 
    As a result, it holds that 
    \begin{equation*}
        \mathrm{dist}(x+d, \M) \leq \frac{2}{\sigma_c}\norm{c(x+d)} \leq \frac{L_c}{\sigma_c} \norm{d}^2. 
    \end{equation*}
    This completes the proof. 
\end{proof}

The next proposition shows that $\psimu(x)$ is greater than
the maximum value of $\psimu(w)$ over $w\in\X$ when $x$ is beyond
the distance of $\frac{\rho}{2}$ from $\X$. 
\begin{prop}   
    \label{Prop_FBE_level_set}
    Suppose Assumption \ref{Assumption_f} and Assumption \ref{Assumption_Q} hold. Then for any $\mu < \mu_{\max}$, and  any $x\in \Y$ such that $\mathrm{dist}(x, \X) \geq \frac{\rho}{2}$, it holds that 
    \begin{equation}
        \psimu(x) > \sup_{w \in \X}\psimu(w). 
    \end{equation}
\end{prop}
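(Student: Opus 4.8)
The plan is to bound the two sides of the claimed inequality separately. The upper bound is immediate: by Lemma~\ref{Le_FBE_fval_compare}, every $w\in\X$ satisfies $\psimu(w)\le f(w)-\tfrac{1}{2\mu}\norm{\Tmu(w)-w}^2\le f(w)$, so $\sup_{w\in\X}\psimu(w)\le\sup_{w\in\X}f(w)$. The real work is to show that $\psimu(x)$ is strictly larger than $\sup_{w\in\X}f(w)$ whenever $x$ is at distance at least $\rho/2$ from $\X$. The mechanism is that the proximal term $\tfrac{1}{2\mu}\norm{w-x}^2$ in the definition of $\psimu$ forces a contribution of order $\rho^2/\mu$, which the choice of $\mu_{\max}$ makes large compared with the total variation of $f$ on $\W$.

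For the lower bound, I would first note that $x\in\Y$ gives $\mathrm{dist}(x,\X)\le\rho\le\tilde{\rho}\le 1$, so $x\in\W$ and the bounds $\norm{J(x)}\le M_J$, $\norm{\nabla f(x)}\le M_f$ apply. Writing $w^\star=\Tmu(x)\in\X$, we have $\psimu(x)=f(x)+\inner{J(x)\nabla f(x),w^\star-x}+\tfrac{1}{2\mu}\norm{w^\star-x}^2\ge f(x)+g\big(\norm{w^\star-x}\big)$ with $g(t):=\tfrac{1}{2\mu}t^2-M_J M_f\,t$, by Cauchy--Schwarz. Since $w^\star\in\X$, $\norm{w^\star-x}\ge\mathrm{dist}(x,\X)\ge\rho/2$, and since $\mu<\mu_{\max}\le\tfrac{\rho}{64(M_J M_f+1)}$ the derivative $g'(t)=t/\mu-M_J M_f$ is positive for all $t\ge\rho/2$, so $g$ is increasing on $[\rho/2,\infty)$ and $g(\norm{w^\star-x})\ge g(\rho/2)=\tfrac{\rho^2}{8\mu}-\tfrac{M_J M_f\rho}{2}$. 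Hence $\psimu(x)\ge f(x)+\tfrac{\rho^2}{8\mu}-\tfrac{M_J M_f\rho}{2}$.

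It then remains to check $\tfrac{\rho^2}{8\mu}-\tfrac{M_J M_f\rho}{2}>\sup_{w\in\X}f(w)-f(x)$. Since both $x$ and every $w\in\X$ lie in $\W$, the right-hand side is at most $\Delta:=\sup_{w,z\in\W}\big(f(w)-f(z)\big)$, which is finite by compactness of $\W$ (and appears explicitly in the definition of $\mu_{\max}$). I would split $\tfrac{\rho^2}{8\mu}=\tfrac{\rho^2}{16\mu}+\tfrac{\rho^2}{16\mu}$ and use the two relevant terms of $\mu_{\max}$: the bound $\mu<\tfrac{\rho^2}{16\Delta}$ gives $\tfrac{\rho^2}{16\mu}>\Delta$, while $\mu<\tfrac{\rho}{64(M_J M_f+1)}$ gives $\tfrac{\rho^2}{16\mu}>4\rho(M_J M_f+1)>\tfrac{M_J M_f\rho}{2}$. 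Adding these two and chaining with the displays above yields $\psimu(x)>f(x)+\Delta\ge\sup_{w\in\X}f(w)\ge\sup_{w\in\X}\psimu(w)$, which is the claim.

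The argument is essentially bookkeeping. The one point that needs a little care is the monotonicity step: one must verify that the vertex $t=\mu M_J M_f$ of the quadratic $g$ lies strictly to the left of $\rho/2$ so that $\inf_{t\ge\rho/2}g(t)=g(\rho/2)$, which is precisely where the term $\tfrac{\rho}{64(M_J M_f+1)}$ in $\mu_{\max}$ is used; one should also note that the estimates degenerate gracefully in the trivial cases $M_J M_f=0$ or $\Delta=0$. I do not anticipate any genuine obstacle.
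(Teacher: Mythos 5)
Your proof is correct and follows essentially the same route as the paper: lower-bound $\psimu(x)$ by $f(x)+\frac{1}{2\mu}\norm{\Tmu(x)-x}^2-M_JM_f\norm{\Tmu(x)-x}$ with $\norm{\Tmu(x)-x}\gtrsim\rho/2$, then use the $\frac{\rho^2}{16\Delta}$ and $\frac{\rho}{64(M_JM_f+1)}$ terms of $\mu_{\max}$ to beat the oscillation of $f$ on $\W$ and conclude via $\psimu(w)\le f(w)$ on $\X$. The only (harmless, arguably cleaner) deviation is that you invoke monotonicity of the quadratic $g(t)=\frac{t^2}{2\mu}-M_JM_ft$ on $[\rho/2,\infty)$ instead of the paper's two-sided sandwich $\frac{31\rho}{64}\le\norm{\Tmu(x)-x}\le\frac{65\rho}{64}$ obtained from nonexpansiveness of $\Pi_{\X}$.
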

\begin{proof}
    For any $x \in \Y $ such that $\mathrm{dist}(x, \X) \geq \frac{\rho}{2}$, 
    it follows from the choices of $\mu$, $\rho$, 
    and $\Tmu(x) = \Pi_\X(x-\mu J(x)\nabla f(x))$, that
    \begin{eqnarray*}
       &&  \norm{\Tmu(x)-x} \leq \norm{x - \Pi_{\X}(x)} +  \norm{\Tmu(x)- \Pi_{\X}(x) } \leq \rho+ M_JM_f \mu 
        \leq \frac{65\rho}{64}
        \\
        && 
        \norm{\Tmu(x)-x} \geq \norm{x - \Pi_{\X}(x)} -  \norm{\Tmu(x)- \Pi_{\X}(x) } \geq \frac{\rho}{2}- M_JM_f \mu 
        \geq \frac{31\rho}{64}.
    \end{eqnarray*}
    As a result, we have
    \begin{equation*}
        \begin{aligned}
            &\psimu(x) = f(x) + \inner{J(x)\nabla f(x), \Tmu(x)-x} + \frac{1}{2\mu} \norm{\Tmu(x)-x}^2 \\
            \geq{}& f(x) -  M_{J} M_{f} \norm{\Tmu(x)-x} + \frac{1}{2\mu} \norm{\Tmu(x)-x}^2 \\
            \geq{}& f(x) - \frac{65\rho}{64}M_{J} M_{f} + \frac{\rho^2}{8 \mu} \geq f(x) + \frac{\rho^2}{16 \mu}.
        \end{aligned}
    \end{equation*}
    Therefore,  for any $x\in \Y$, it holds that 
    \begin{equation*}
        \begin{aligned}
            &\psimu(x) \geq f(x) + \frac{\rho^2}{16\mu} > f(x) + \left(\sup_{w, z \in \W} f(w) - f(z) \right) \\
            \geq{}& f(x) + \left(\sup_{w \in \Z} f(w) - f(x) \right) =  \sup_{w \in \Z} f(w) \geq  \sup_{w \in \X} f(w) \geq \sup_{w \in \X} \psimu(w). 
        \end{aligned}
    \end{equation*}
    This completes the proof. 
\end{proof}

In the following proposition, we  estimate the decrease in $\psimu$ over the iterates $\{\xk\}$, with stepsizes $\{\eta_k\}$ upper-bounded by  
\begin{equation}
    \eta_{\max} = \min\left\{ 
    \frac{\sigma_{Q}^2 \mu}{16(8 M_Q M_c^2 +  2\sigma_{Q})^2}, ~ \frac{\rho \mu}{8M_J M_f L_c} \right\}.
\end{equation}
The proposition shows the descent property of the projected gradient method
\eqref{Eq_FBE_PGD} for minimizing $\psimu$ over $\M$.

\begin{prop}
    \label{Prop_FBE_P2GD_decrease}
    Suppose Assumption \ref{Assumption_f} and Assumption \ref{Assumption_Q} hold. Then for any $\mu \leq \mu_{\max}$,  $x_k \in \Y $ and  $\eta_k \leq \eta_{\max}$, it holds that for $x_{k+1}$ generated by 
    \eqref{Eq_FBE_PGD},
    \begin{equation}
        \psimu(\xkp) \leq \psimu(\xk) - \frac{\eta_k}{4} \cdot \left(\frac{\sigma_{Q}}{8 M_Q M_c^2 +  2\sigma_{Q}} \right)^2 \frac{1}{\mu^2} \norm{\xk - \Tmu(\xk)}^2. 
    \end{equation}
\end{prop}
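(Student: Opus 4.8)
The plan is to combine the quadratic upper bound for $\psimu$ (available from the $\tfrac{2}{\mu}$-Lipschitz continuity of $\nabla\psimu$ in Lemma~\ref{Le_FBE_gradient}) with the sufficient-descent estimate of Proposition~\ref{Prop_FBE_P2GD_direction}, the only genuinely new ingredient being control of the error incurred by projecting back onto the nonlinear manifold $\M$. Throughout, abbreviate $s_k := \norm{\xk - \Tmu(\xk)}$, write $P_k := I_n - \Jc(\xk)\Jc(\xk)^\dagger$ for the orthogonal projector onto $\mathrm{null}(\Jc(\xk)\tp)$, so that $g_k = \tfrac1\mu P_k(\xk - \Tmu(\xk)) \in \mathrm{null}(\Jc(\xk)\tp)$, and set $\kappa := \tfrac{\sigma_Q}{8 M_Q M_c^2 + 2\sigma_Q} \le \tfrac12$. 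First I would record the crude size bounds that make all later lemmas applicable: from Lemma~\ref{Le_FBE_Tx_x}, $\xk\in\Y$, and $\mu\le\mu_{\max}$ one gets $s_k \le M_J M_f\mu + \rho \le \tfrac{65}{64}\rho$, hence $\norm{g_k}\le s_k/\mu$, and since $\eta_k \le \eta_{\max} \le \tfrac{\kappa^2\mu}{16}\le\tfrac{\mu}{64}$ this gives $\norm{\eta_k g_k} \le \tfrac{\rho}{4} \le \tfrac{\sigma_c}{32 L_c}$.

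Next I would decompose the projected step. Since $-\eta_k g_k \in \mathrm{null}(\Jc(\xk)\tp)$ with $\norm{\eta_k g_k}\le\tfrac\rho4$ and $\xk\in\Y$, Lemma~\ref{Le_FBE_tangent_dist} gives $\mathrm{dist}(\xk - \eta_k g_k, \M) \le \tfrac{L_c}{\sigma_c}\eta_k^2\norm{g_k}^2$. Writing $\xkp - \xk = -\eta_k g_k + r_k$ with $r_k := \Pi_{\M}(\xk - \eta_k g_k) - (\xk - \eta_k g_k)$, we therefore have $\norm{r_k} = \mathrm{dist}(\xk-\eta_k g_k,\M) \le \tfrac{L_c}{\sigma_c}\eta_k^2\norm{g_k}^2$; the key point is that $r_k$ is \emph{second order} in the step. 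In particular $\norm{r_k}\le\eta_k\norm{g_k}$ (because $\eta_k\norm{g_k}\le\sigma_c/L_c$) and $\norm{\xkp - \xk}\le 2\eta_k\norm{g_k}\le \tfrac{2\eta_k}{\mu}s_k$ is small, so the segment $[\xk,\xkp]$ stays in the neighborhood of $\X\cap\M$ on which Lemma~\ref{Le_FBE_gradient} yields the $\tfrac2\mu$-Lipschitz bound for $\nabla\psimu$, legitimizing the descent inequality
\[
\psimu(\xkp) \le \psimu(\xk) + \inner{\nabla\psimu(\xk),\, \xkp - \xk} + \tfrac1\mu\norm{\xkp - \xk}^2 .
\]

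I would then estimate the right-hand side term by term. For the main (linear) contribution, since $P_k$ is symmetric and idempotent, $\inner{\nabla\psimu(\xk), g_k} = \tfrac1\mu\inner{P_k\nabla\psimu(\xk),\, \xk - \Tmu(\xk)}$, and Proposition~\ref{Prop_FBE_P2GD_direction} (which bounds exactly $\inner{P_k\nabla\psimu(\xk),\Tmu(\xk)-\xk}$) gives $\inner{\nabla\psimu(\xk), -\eta_k g_k} \le -\tfrac{\eta_k}{2\mu^2}\kappa^2 s_k^2$. For the remainder, $\inner{\nabla\psimu(\xk), r_k} \le \norm{\nabla\psimu(\xk)}\,\norm{r_k}$, where $\norm{\nabla\psimu(\xk)} \le 3(M_J+1)M_f + \tfrac{2\rho}{\mu}$ by \eqref{Eq_Ub_nabla_psi} and $\norm{r_k}\le \tfrac{L_c}{\sigma_c}\eta_k^2 s_k^2/\mu^2$, while $\tfrac1\mu\norm{\xkp-\xk}^2 \le \tfrac{4\eta_k^2}{\mu^3}s_k^2$; both remainder terms thus have the form $(\text{const})\cdot\tfrac{\eta_k}{\mu}\cdot\tfrac{\eta_k}{\mu^2}s_k^2$. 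Collecting, $\psimu(\xkp) - \psimu(\xk) \le -\tfrac{\eta_k\kappa^2}{2\mu^2}s_k^2 + C\eta_k\cdot\tfrac{\eta_k}{\mu^2}s_k^2$ for an explicit $C$ depending on $L_c,\sigma_c,M_J,M_f,\rho$, and the two quantities in the definition of $\eta_{\max}$ are precisely calibrated so that $C\eta_k \le \tfrac{\kappa^2}{4}$, which converts this into the claimed bound $\psimu(\xkp)\le\psimu(\xk) - \tfrac{\eta_k}{4}\kappa^2\mu^{-2}\norm{\xk - \Tmu(\xk)}^2$.

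The main obstacle is exactly the point flagged above: because $\xkp$ is obtained by projecting onto the nonlinear manifold $\M$, the update \eqref{Eq_FBE_PGD} is not an honest Euclidean gradient step, and the projection produces a correction $r_k$ that, a priori, could be first order in $\eta_k$ and destroy the descent. Showing $\norm{r_k} = O(\eta_k^2\norm{g_k}^2)$ is precisely what Lemma~\ref{Le_FBE_tangent_dist} buys, but invoking it — and the $\tfrac2\mu$-Lipschitz descent lemma — first requires pinning down the crude estimates $s_k\lesssim\rho$ and $\norm{\eta_k g_k}\le\tfrac\rho4$ so that the tiny segment $[\xk,\xkp]$ remains inside the good neighborhood of $\X\cap\M$; once that is in place, the remainder of the argument is a careful but routine balancing of the error constants against $\eta_{\max}$ and $\mu_{\max}$.
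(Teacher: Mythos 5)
Your proposal is correct and follows essentially the same route as the paper: the paper passes through the intermediate point $y_k=\xk-\eta_k g_k$ and charges the projection error $\norm{\xkp-\yk}\le \frac{L_c}{\sigma_c}\norm{\yk-\xk}^2$ against the Lipschitz constant of $\psimu$ from \eqref{Eq_Ub_nabla_psi}, whereas you fold the same correction $r_k$ into a single descent-lemma inequality and charge it against $\norm{\nabla\psimu(\xk)}$ — the same ingredients (Lemma \ref{Le_FBE_gradient}, Proposition \ref{Prop_FBE_P2GD_direction}, Lemma \ref{Le_FBE_tangent_dist}) and the same balancing against $\eta_{\max}$, $\rho\le\frac{\sigma_c}{8L_c}$, and $\mu\le\frac{\sigma_c}{6(M_J+1)M_fL_c}$. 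One small note: with the crude bound $\norm{\xkp-\xk}\le 2\eta_k\norm{g_k}$ the final calibration $C\eta_{\max}\le\frac{\kappa^2}{4}$ is slightly too tight, but your own estimate $\eta_k\norm{g_k}\le\frac{\sigma_c}{32L_c}$ gives $\norm{r_k}\le\frac{1}{32}\eta_k\norm{g_k}$ and hence $\norm{\xkp-\xk}\le\frac{33}{32}\eta_k\norm{g_k}$, with which the constants close comfortably.
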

\begin{proof}
    Let $\yk = \xk - \eta_k g_k$, then it holds that 
    \begin{equation*}
        \begin{aligned}
            &\psimu(\yk) - \psimu(\xk) \leq - \inner{\eta_k g_k, \nabla \psimu(\xk)} + \eta_k^2 \cdot \left(\frac{1}{2} \cdot \frac{2}{\mu}\right) \cdot \frac{1}{\mu^2}\norm{x_k - \Tmu(x_k)}^2\\
            \leq{}& 
            - \frac{\eta_k}{2} \cdot \left(\frac{\sigma_{Q}}{8 M_Q M_c^2 +  2\sigma_{Q}} \right)^2 \frac{1}{\mu^2}\norm{x_k - \Tmu(x_k)}^2 + \eta_k^2 \cdot \frac{1}{\mu^3}\norm{x_k - \Tmu(x_k)}^2.
        \end{aligned}
    \end{equation*}
    Here the first inequality follows from Lemma \ref{Le_FBE_gradient} and the second inequality uses Proposition \ref{Prop_FBE_P2GD_direction}. 

    Moreover, it follows from the Lipschitz smoothness of $c$ that 
    \begin{equation*}
        \begin{aligned}
            &\norm{c(\yk)} \leq \norm{c(\xk) + \inner{\Jc(\xk), \yk - \xk}} + \norm{c(\yk) -  c(\xk) - \inner{\Jc(\xk), \yk - \xk}}\\
            \leq{}& \norm{c(\xk)-\eta_k\inner{\Jc(\xk), g_k}}  + \frac{L_c}{2} \norm{\yk - \xk}^2 = \frac{L_c}{2} \norm{\yk - \xk}^2. 
        \end{aligned}
    \end{equation*}
    Note that the first term in the second inequality is equal to $0$ since
    $x_k\in \M$ and $g_k \in \mathrm{null}(\Jc(x_k)^\top).$
    As a result, by Lemma~\ref{Le_FBE_tangent_dist}, we can conclude that $ \norm{\xkp - \yk} = \mathrm{dist}(\yk, \M) \leq \frac{L_c}{\sigma_c} \norm{\yk - \xk}^2$. 
    Then  it follows from \eqref{Eq_Ub_nabla_psi}  that 
    \begin{equation*}
        \begin{aligned}
            &\psimu(\xkp) - \psimu(\yk) 
        \leq \left(3(M_J+1) M_f + \frac{2\rho}{\mu}\right)\cdot \frac{L_c}{\sigma_c} \norm{\yk - \xk}^2\\
        \leq{}& \eta_k^2 \cdot \left(3(M_J+1) M_f + \frac{2\rho}{\mu}\right)\cdot \frac{L_c}{\sigma_c } \frac{1}{\mu^2}\norm{x_k - \Tmu(x_k)}^2.
        \end{aligned}
    \end{equation*}
    As a result, we have 
    \begin{equation*}
        \begin{aligned}
            \psimu(\xkp) \leq{}&  \psimu(\xk) - \frac{\eta_k}{2} \cdot \left(\frac{\sigma_{Q}}{8 M_Q M_c^2 +  2\sigma_{Q}} \right)^2 \frac{1}{\mu^2} \norm{\xk - \Tmu(\xk)}^2 \\
            &\quad  + \eta_k^2 \cdot  \left(\frac{1}{\mu} + \frac{ (3M_J+1)M_f L_c}{\sigma_c} +\frac{2 \rho L_c}{\sigma_c \mu} \right) \frac{1}{\mu^2} \norm{\xk - \Tmu(\xk)}^2\\
            \leq{}&  \psimu(\xk) - \frac{\eta_k}{2} \cdot \left(\frac{\sigma_{Q}}{8 M_Q M_c^2 +  2\sigma_{Q}} \right)^2 \frac{1}{\mu^2} \norm{\xk - \Tmu(\xk)}^2 \\
            &\quad  + \eta_k^2 \cdot  \left(\frac{5}{4\mu} + \frac{ (3M_J+1)M_f L_c}{\sigma_c}  \right) \frac{1}{\mu^2} \norm{\xk - \Tmu(\xk)}^2\\
            \leq{}& \psimu(\xk) - \frac{\eta_k}{2} \cdot \left(\frac{\sigma_{Q}}{8 M_Q M_c^2 +  2\sigma_{Q}} \right)^2\frac{1}{\mu^2}\norm{\xk - \Tmu(\xk)}^2 + \frac{2\eta_k^2}{\mu}\frac{1}{\mu^2}\norm{\xk - \Tmu(\xk)}^2\\
            \leq{}& \psimu(\xk) - \frac{\eta_k}{4} \cdot \left(\frac{\sigma_{Q}}{8 M_Q M_c^2 +  2\sigma_{Q}} \right)^2 \frac{1}{\mu^2} \norm{\xk - \Tmu(\xk)}^2.
        \end{aligned}
    \end{equation*} 
    Here the second inequality follows from the fact that $\rho\leq \frac{\sigma_c}{8L_c}$. 
    The third follows from the fact that $\mu\leq \frac{\sigma_c}{6(M_J+1)M_f L_c}$,
    which imply that $\frac{ (3M_J+1)M_f L_c}{\sigma_c} \leq \frac{1}{2\mu}$. 
    This completes the proof. 
\end{proof}

Finally, based on Proposition \ref{Prop_FBE_P2GD_decrease}, we present the following theorem illustrating the convergence of \eqref{Eq_FBE_PGD} and its $\ca{O}(\varepsilon^{-2})$ convergence rate. 
\begin{theo}
    \label{Theo_FBE_P2GD_rate}
    Suppose Assumption \ref{Assumption_f} and Assumption \ref{Assumption_Q} hold, and we set $\eta_{\min} \in (0, \eta_{\max}]$.  Then for any $\mu \leq \mu_{\max}$, $x_0 \in \X \cap \M $,  $\eta \in (\eta_{\min}, \eta_{\max})$, and any $K\geq 1$,
   the sequence $\{x_k\}$ generated by \eqref{Eq_FBE_PGD} satisfies the following
   iteration complexity, 
    \begin{equation}
        \begin{aligned}
            &\inf_{k\leq K} \mathrm{dist}\big(0, \nabla f(\Tmu(\xk)) + \mathrm{range}(\nabla c(\Tmu(\xk))) + \NX(\Tmu(\xk)) \big) \\
            \leq{}& \sqrt\frac{\left(32 M_Q M_c^2 +  8\sigma_{Q}\right)^2
            \left( \psimu(x_0) - \inf_{z \in \X}\psimu(z) \right)
            }{\sigma_{Q}^2 \eta_{\min} K}.
        \end{aligned}
    \end{equation}
\end{theo}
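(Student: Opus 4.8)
The overall strategy is to iterate the one-step decrease of Proposition~\ref{Prop_FBE_P2GD_decrease}, telescope, and then convert the resulting control of $\min_k\norm{\xk-\Tmu(\xk)}$ into control of the stationarity residual of \eqref{Prob_Ori} at $\Tmu(\xk)$ via the intermediate estimate \eqref{Eq_Theo_Equivalence_FBE_approx_0} established inside the proof of Theorem~\ref{Theo_Equivalence_FBE_approx}. The only genuinely delicate preliminary is to verify that the whole trajectory $\{\xk\}$ stays inside $\Y$, since both Proposition~\ref{Prop_FBE_P2GD_decrease} and \eqref{Eq_Theo_Equivalence_FBE_approx_0} are only valid there.

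\textbf{Step 1 (invariance).} I would prove by induction that $\xk\in\Y$ and, more precisely, $\mathrm{dist}(\xk,\X)<\rho/2$, for every $k$; the base case is immediate since $x_0\in\X\cap\M$. For the inductive step, assuming this for $x_0,\dots,\xk$, Proposition~\ref{Prop_FBE_P2GD_decrease} applies at steps $0,\dots,k$ and gives $\psimu(\xkp)\le\psimu(\xk)\le\cdots\le\psimu(x_0)\le\sup_{w\in\X}\psimu(w)$, the last inequality because $x_0\in\X$. Writing $\yk=\xk-\eta_k g_k$, the bound $\norm{g_k}\le\tfrac1\mu\norm{\xk-\Tmu(\xk)}\le M_J M_f+\rho/\mu$ (Lemma~\ref{Le_FBE_Tx_x}) together with $\eta_k\le\eta_{\max}\le\sigma_Q^2\mu/(16(8M_Q M_c^2+2\sigma_Q)^2)$ and $\mu\le\rho/(64(M_J M_f+1))$ forces $\eta_k\norm{g_k}<\rho/4$; then Lemma~\ref{Le_FBE_tangent_dist} (recall $g_k$ lies in the tangent space $\mathrm{null}(\Jc(\xk)^\top)$) and $\rho\le\sigma_c/(8L_c)$ give $\norm{\xkp-\yk}=\mathrm{dist}(\yk,\M)\le\tfrac{L_c}{\sigma_c}(\eta_k\norm{g_k})^2<\rho/128$, so $\norm{\xkp-\xk}<\rho/2$ and hence $\mathrm{dist}(\xkp,\X)<\rho$, i.e.\ $\xkp\in\Y$. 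Finally, the contrapositive of Proposition~\ref{Prop_FBE_level_set}, applied to $\xkp\in\Y$ with $\psimu(\xkp)\le\sup_{w\in\X}\psimu(w)$, upgrades this to $\mathrm{dist}(\xkp,\X)<\rho/2$, closing the induction. This same level-set argument confines the trajectory to a region on which $\psimu\ge\inf_{z\in\X}\psimu(z)$, so in particular $\psimu(x_K)\ge\inf_{z\in\X}\psimu(z)$.

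\textbf{Step 2 (telescoping).} Summing the inequality of Proposition~\ref{Prop_FBE_P2GD_decrease} over $k=0,\dots,K-1$, telescoping the left-hand side, and using $\psimu(x_K)\ge\inf_{z\in\X}\psimu(z)$ together with $\eta_k\ge\eta_{\min}$, I obtain
\[
\frac{\eta_{\min}}{4}\Big(\frac{\sigma_Q}{8M_Q M_c^2+2\sigma_Q}\Big)^2\frac{1}{\mu^2}\sum_{k=0}^{K-1}\norm{\xk-\Tmu(\xk)}^2\ \le\ \psimu(x_0)-\inf_{z\in\X}\psimu(z).
\]
Dividing by $K$ and passing to the minimum over $k$,
\[
\min_{0\le k\le K-1}\norm{\xk-\Tmu(\xk)}\ \le\ \frac{2\mu(8M_Q M_c^2+2\sigma_Q)}{\sigma_Q}\sqrt{\frac{\psimu(x_0)-\inf_{z\in\X}\psimu(z)}{\eta_{\min}K}}.
\]

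\textbf{Step 3 (conversion and conclusion).} For each $\xk\in\Y$ the chain of inequalities in \eqref{Eq_Theo_Equivalence_FBE_approx_0} gives $\mathrm{dist}\big(0,J(\Tmu(\xk))\nabla f(\Tmu(\xk))+\NX(\Tmu(\xk))\big)\le\tfrac2\mu\norm{\xk-\Tmu(\xk)}$; since $\mathrm{range}(J(y)-I_n)\subseteq\mathrm{range}(\Jc(y))$ for every $y$ (as in the proof of Proposition~\ref{Prop_equivalence_FBE}) we have $J(y)\nabla f(y)\in\nabla f(y)+\mathrm{range}(\Jc(y))$, so
\[
\mathrm{dist}\big(0,\nabla f(\Tmu(\xk))+\mathrm{range}(\Jc(\Tmu(\xk)))+\NX(\Tmu(\xk))\big)\ \le\ \frac2\mu\norm{\xk-\Tmu(\xk)}.
\]
Taking the infimum over $k\le K$, combining with Step 2, and noting $\tfrac2\mu\cdot\tfrac{2\mu(8M_Q M_c^2+2\sigma_Q)}{\sigma_Q}=\tfrac{32M_Q M_c^2+8\sigma_Q}{\sigma_Q}$, the factors $\mu$ cancel and rewriting the leading constant inside the square root gives precisely the asserted bound. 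The main obstacle is Step 1: it is the one place where one must simultaneously exploit the descent property, the smallness of each step enforced by $\eta_{\max}$, the curvature bound of Lemma~\ref{Le_FBE_tangent_dist}, and the level-set property of Proposition~\ref{Prop_FBE_level_set} (and verify $\psimu(x_K)\ge\inf_{z\in\X}\psimu(z)$); once confinement to $\Y$ is in hand, Steps 2 and 3 are a routine averaging computation.
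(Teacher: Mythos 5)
Your proposal follows the paper's own proof essentially step for step: the same induction combining the descent estimate of Proposition \ref{Prop_FBE_P2GD_decrease}, the step bounds from Lemmas \ref{Le_FBE_Tx_x} and \ref{Le_FBE_tangent_dist}, and the level-set Proposition \ref{Prop_FBE_level_set} to confine $\{x_k\}$ to within distance $\rho/2$ of $\X$, followed by the same telescoping and the conversion of $\norm{\xk-\Tmu(\xk)}$ into the stationarity residual via \eqref{Eq_Theo_Equivalence_FBE_approx_0}. The only slightly informal point, the lower bound $\psimu(x_K)\geq \inf_{z\in\X}\psimu(z)$, is invoked in exactly the same way (and with no more justification) in the paper's proof, so your argument is as complete as the published one.
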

\begin{proof}
    Notice that it follows from Proposition \ref{Prop_FBE_P2GD_decrease} that for any $k \geq 0$, we have 
    \begin{equation}
        \label{Eq_Theo_FBE_P2GD_rate_0}
        \psimu(\xkp) \leq \psimu(\xk) - \frac{\eta_k}{4} \cdot \left(\frac{\sigma_{Q}}{8 M_Q M_c^2 +  2\sigma_{Q}} \right)^2 \frac{1}{\mu^2} \norm{\xk - \Tmu(\xk)}^2.
    \end{equation}

    We first prove that $\{\xk\} \subseteq \{x \in \M: \mathrm{dist}(x, \X)\leq \frac{\rho}{2}\}$  by induction. 
    Suppose for some $k \geq 0$, it holds that $\{x_i: 0\leq i \leq k\} \subseteq \{x \in \M: \mathrm{dist}(x, \X)\leq \frac{\rho}{2}\}$. Then Proposition \ref{Prop_FBE_P2GD_decrease} illustrates that $\psimu(\xkp) \leq \psimu(\xk) \leq \psimu(x_0)$. Additionally, it follows from 
    Lemma~\ref{Le_FBE_tangent_dist} and
    Lemma \ref{Le_FBE_Tx_x} that
    \begin{equation*}
       \begin{aligned}
            &\mathrm{dist}(\xkp, \X) \leq \mathrm{dist}(\xk, \X) + \norm{\xkp - \xk}\\
            \leq{}& \frac{\rho}{2} + (\norm{\yk - \xk} + \norm{\xkp - \yk})\\
            \leq{}& \frac{\rho}{2} +   \left( \norm{\yk - \xk}+ \frac{L_c}{\sigma_c} \norm{\yk - \xk}^2 \right)\\
            \leq{}& \frac{\rho}{2} +  2 \norm{\yk - \xk} \leq \frac{\rho}{2} + \frac{2M_J M_f \eta_k}{\mu} \leq \frac{3}{4} \rho < \rho. 
       \end{aligned}
    \end{equation*}
    Here the third and the fourth inequalities follow from the choice of $\eta_{\max}$. 
    Since $\xkp\in \M$ and 
    $\mathrm{dist}(\xkp, \X) \leq \rho$, we
    obtain that  $\xkp\in\Y$. Together with the fact that $\psimu(\xkp) \leq \psimu(x_0)\leq\sup_{w\in\X}\psimu(w)$, Proposition \ref{Prop_FBE_level_set} illustrates that $\mathrm{dist}(\xkp, \X) \leq \frac{\rho}{2}$. As a result, it follows from  induction that $\{\xk\} \subseteq \{x \in \M: \mathrm{dist}(x, \X)\leq \frac{\rho}{2}\}$.

    Then for any $K > 0$, \eqref{Eq_Theo_FBE_P2GD_rate_0} illustrates that 
    \begin{equation*}
        \begin{aligned}
            &\sum_{0\leq k\leq K}  \frac{1}{\mu^2}\norm{\xk - \Tmu(\xk)}^2 \leq \frac{4\left(8 M_Q M_c^2 +  2\sigma_{Q}\right)^2}{\sigma_{Q}^2 \eta_{\min}}  \cdot \left(\psimu(x_0) - \psimu(x_K)\right) \\
            \leq{}& \frac{4\left(8 M_Q M_c^2 +  2\sigma_{Q}\right)^2}{\sigma_{Q}^2 \eta_{\min}} \cdot \left( \psimu(x_0) - \inf_{z \in \X}\psimu(z) \right). 
        \end{aligned}
    \end{equation*}
    As a result, we have 
    \begin{equation*}
        \inf_{k\leq K} \frac{1}{\mu^2}\norm{\xk - \Tmu(\xk)}^2 \leq \frac{4\left(8 M_Q M_c^2 +  2\sigma_{Q}\right)^2\left( \psimu(x_0) - \inf_{z \in \X}\psimu(z) \right)}{\sigma_{Q}^2 \eta_{\min} K} . 
    \end{equation*}
    On the other hand, it follows from the same proof techniques as \eqref{Eq_Theo_Equivalence_FBE_approx_0} that 
    \begin{equation*}
        \begin{aligned}
            &\mathrm{dist}(0, J(\Tmu(x)) \nabla f(\Tmu(x)) + \NX(\Tmu(x)))  \leq \frac{2}{\mu} \norm{\Tmu(x) -x}.
        \end{aligned}
    \end{equation*}
    Therefore, we have 
    \begin{equation*}
        \inf_{k\leq K} \mathrm{dist}(0, \nabla f(\Tmu(\xk)) + \mathrm{range}(\nabla c(\Tmu(\xk))) + \NX(\Tmu(\xk)) )^2 \leq 
        \frac{\left(32 M_Q M_c^2 +  8\sigma_{Q}\right)^2\left( \psimu(x_0) - \inf_{z \in \Y}\psimu(z) \right)}{\sigma_{Q}^2 \eta_{\min} K} . 
    \end{equation*}
    This completes the proof. 
\end{proof}

\subsection{Construction of projective mapping}

In this subsection, we discuss the construction of the projective mapping for \eqref{Prob_Ori} that satisfies Assumption \ref{Assumption_Q}. Although \cite{xiao2025exact} provides some practical schemes for constructing the projective mapping, these schemes cannot guarantee the semi-positive definiteness of $Q(x)$ over $\Rn$. 

We first present the following lemma illustrating how to construct the projective mapping that satisfies Assumption \ref{Assumption_Q} based on the presented projective mappings in \cite{xiao2025exact}. 
\begin{lem}
    Suppose $\widehat{Q}: \Rn \to \bb{S}^{n\times n}$ is locally Lipschitz smooth over $\Rn$, and for any $x \in \X$, it holds that $\mathrm{null}(\widehat{Q}(x)) = \mathrm{range}(\NX(x))$, then the mapping $Q(x) := \widehat{Q}(x)^2$ 
    satisfies all the conditions in Assumption \ref{Assumption_Q}.
\end{lem}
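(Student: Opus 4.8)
The plan is to verify the three conditions in Assumption~\ref{Assumption_Q} directly for $Q(x) := \widehat{Q}(x)^2$, using only elementary linear algebra together with the hypotheses on $\widehat{Q}$. First I would address local Lipschitz smoothness: since $\widehat{Q}$ is locally Lipschitz smooth over $\Rn$ and the squaring map $M \mapsto M^2$ on $\bb{S}^{n\times n}$ is a polynomial (hence $C^\infty$) operation, the composition $x \mapsto \widehat{Q}(x)^2$ is locally Lipschitz smooth over $\Rn$; this is a routine chain-rule argument, so I would state it briefly. Next I would handle positive semi-definiteness, which is the new property not automatically enjoyed by the constructions of \cite{xiao2025exact}: for any $x$ and any $v \in \Rn$, $v\tp \widehat{Q}(x)^2 v = v\tp \widehat{Q}(x)\tp \widehat{Q}(x) v = \norm{\widehat{Q}(x) v}^2 \geq 0$, where I have used that $\widehat{Q}(x)$ is symmetric. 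Hence $Q(x) = \widehat{Q}(x)^2 \in \bb{S}^{n\times n}_+$ for all $x \in \Rn$.

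The remaining condition is that $\mathrm{null}(Q(x)) = \mathrm{range}(\NX(x))$ for every $x \in \X$. The key observation is that for a symmetric matrix $B$ one always has $\mathrm{null}(B^2) = \mathrm{null}(B)$: the inclusion $\mathrm{null}(B) \subseteq \mathrm{null}(B^2)$ is immediate, while if $B^2 v = 0$ then $0 = v\tp B^2 v = \norm{Bv}^2$, so $Bv = 0$. Applying this with $B = \widehat{Q}(x)$ gives $\mathrm{null}(Q(x)) = \mathrm{null}(\widehat{Q}(x)^2) = \mathrm{null}(\widehat{Q}(x))$, and by hypothesis the latter equals $\mathrm{range}(\NX(x))$ for $x \in \X$. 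This establishes Assumption~\ref{Assumption_Q}(2) and completes the verification.

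I do not anticipate a genuine obstacle here: every step is a one-line linear-algebra fact, and the only subtlety is making sure to invoke the symmetry of $\widehat{Q}(x)$ (which holds because $\widehat{Q}$ maps into $\bb{S}^{n\times n}$) in both the positive-semi-definiteness argument and the null-space identity $\mathrm{null}(B^2) = \mathrm{null}(B)$. The latter identity genuinely requires symmetry — for a general matrix $B$ one can have $\mathrm{null}(B) \subsetneq \mathrm{null}(B^2)$ — so I would make that dependence explicit rather than treat it as obvious. Beyond that, the proof is essentially a checklist, and I would present it as three short numbered paragraphs matching the three items of Assumption~\ref{Assumption_Q}.
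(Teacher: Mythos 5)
Your proposal is correct and follows essentially the same route as the paper's proof: verify local Lipschitz smoothness by composition, positive semi-definiteness of $\widehat{Q}(x)^2$, and the null-space condition via $\mathrm{null}(\widehat{Q}(x)^2) = \mathrm{null}(\widehat{Q}(x))$. You simply make explicit the symmetry-based one-line arguments that the paper leaves implicit, which is a fair and complete rendering of the same argument.
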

\begin{proof}
    For any $\widehat{Q}: \Rn \to \bb{S}^{n\times n}$ that is locally Lipschitz smooth over $\Rn$, and $\mathrm{null}(\widehat{Q}(x)) = \mathrm{range}(\NX(x))$ for any $x \in \X$, the locally Lipschitz smoothness of $Q$ directly follows from the Lipschitz smoothness of $\widehat{Q}$. Moreover, we can verify that $Q(x) \succeq 0$ holds for any $x \in \Rn$, and thus verifies Assumption \ref{Assumption_Q}(1). Furthermore, as $\mathrm{null}(Q(x)) = \mathrm{null}(\widehat{Q}(x)^2)$ and $\mathrm{null}(\widehat{Q}(x)) = \mathrm{range}(\NX(x))$, we can conclude the validity of Assumption \ref{Assumption_Q}(2). This completes the proof. 
\end{proof}

We end this subsection by presenting some possible choices of the projective mappings $Q$ in Table \ref{Table_Q_mapping} for a variety of commonly encountered $\X$. It can be easily verified that all the projective mappings $Q$ in Table \ref{Table_Q_mapping} satisfy Assumption \ref{Assumption_Q}, and we omit the proofs for simplicity. 

\begin{table}[htb]
    \centering
    \small
    \begin{tabular}{p{4cm}|p{4.5cm}|p{6cm}}
        \hline
        \textbf{Name of constraints} & \textbf{Formulation of $\X$} & \textbf{Possible choices of projective mapping} \\ \hline
        Box constraints &
        $\{x \in \Rn: x\geq 0\}$ &
        $Q(x) = \mathrm{Diag}(x^2)$  \\ \hline
        
        Norm ball &
        $\{x \in \Rn: \norm{x} \leq u\}$ &
        $Q(x) = \frac{1 + \norm{x/u}^4}{2}I_n - \frac{xx\tp}{u^2} $ \\ \hline

        Probability simplex &
        $\{x \in \Rn: x \geq 0, \norm{x}_1 = 1\}$ &
        $Q(x) = \left(\text{Diag}(x) - xx\tp \right)^2$ \\ \hline

        PSD cone &
        $\{X \in \bb{S}^{n\times n} : X \succeq 0\}$ &
        $Q(X) : Y \mapsto \Phi(X^2Y)$ \\ \hline
        
    \end{tabular}
    \caption{Some constraints and their corresponding projective mappings. Here $\Phi$ is the symmetrization of a square matrix, defined as $\Phi(M):= \frac{M+M\tp}{2}$. }
    \label{Table_Q_mapping}
\end{table}

\section{Numerical Experiments}
In this section, we present several illustrative numerical examples to demonstrate the performance of our proposed envelope approach. All experiments are conducted in Python 3.12.2 on a Windows server equipped with an AMD Ryzen 7 5700 CPU and 16 GB of RAM.

To ensure consistency across algorithms with varying stopping criteria, we evaluate the stationarity of the output $x$ using $\mathrm{dist}(0, \nabla f(y) + \NX(y) + \mathrm{range}(\nabla c(y)) )$ with $y = \Pi_{\X}(x)$.  Additionally, we use $\norm{c(\Pi_{\X}(x))}$ to measure the feasibility violation of the output $x$. 

For the envelope parameter $\mu$ in \eqref{Prob_FBE}, we select $\mu = 10^{-3}$ for all test instances. Alternatively, we can employ the adaptive estimation technique \cite{sergeyev2006global} and Monte Carlo techniques \cite{marion2023finite,xiao2025cdopt} to estimate the upper or lower bounds of the constants in Tables \ref{Simbol_Definition}-\ref{Simbol_Definition_2}, hence providing an estimate for $\mu_{\max}$.

\subsection{Semi-positive definite cone with spherical constraints}
\label{Subsection_Numerocal_experiment_1}
In this subsection, we test the performance of our proposed envelope approach on the following optimization problem over the semi-positive definite (SPD) cone,
\begin{equation}
\label{Example_SDP}
\begin{aligned}
\min_{X \in \bb{S}^{n \times n}} \quad &  \inner{B, X} + \frac{1}{2} \inner{X, \ca{H}(X)} + \frac{\nu}{6} \norm{X}\ff^3\
 \quad \text{s. t.} &\norm{X}\fs = 1, \quad  X \succeq 0, \quad \norm{X}_2 \leq M.
\end{aligned}
\end{equation}
Here $B \in \bb{S}^{n \times n}$ is symmetric matrix, $\ca{H}: \bb{S}^{n \times n} \to \bb{S}^{n \times n}$ is a self-adjoint linear mapping over $\bb{S}^{n \times n}$,  $\nu > 0$ is the regularization parameter for the cubic term, and we set $M = 10^{6}$.
For this problem, we have $\X = \{X\in \bb{S}^{n\times n}: \norm{X}_2 \leq M, \; X \succeq 0\}$, and $\M = \{X\in\bb{S}^{n\times n}: \norm{X}\fs = 1\}$. The corresponding constraint dissolving mapping for $\X$ is chosen as 
\begin{equation}
    Q(X) = Y \mapsto \Phi(X^2 \Theta_M(X)^2 Y).
\end{equation}
Here we define 
\begin{equation}
    \theta_M(x) =
\begin{cases}
1, & x \leq M-1 \\
2(x - (M - 1) )^3 - 3(x - (M    - 1) )^2 + 1, & M-1 < x \leq M \\
0, & x > M,
\end{cases}
\end{equation}
and for any symmetric matrix $X = U \mathrm{Diag}(\lambda) U\tp$ with $\lambda \in \Rn$, we define 
\begin{equation}
    \Theta_{M}(X) = U \mathrm{Diag}(\theta_{M}(\lambda)) U\tp.
\end{equation}
It is worth mentioning that for any $X$ such that $\norm{X}_2 \leq M-1$, it holds that $Q(X) = Y \mapsto \Phi(X^2 Y)$.

Based on the partial forward-backward envelope \eqref{Prob_FBE}, we reformulate the constrained optimization problem \eqref{Example_SDP} as the following equality-constrained problem,
\begin{equation}
\label{Example_SDP_FBE}
\begin{aligned}
\min_{X \in \bb{R}^{n \times n}} \quad &\psimu(X)\
\quad \text{s. t.}  & \norm{X}\fs = 1.
\end{aligned}
\end{equation}
Here $\psimu$ is the forward-backward semi-envelope for \eqref{Example_SDP}. 
For the above nonlinear nonconvex matrix optimization problem \eqref{Example_SDP}, there are limited choices of efficient and easy-to-use solvers. While PENLAB can handle \eqref{Example_SDP}, it is only {\sc Matlab}-based. To our knowledge, no Python-based solver exists for \eqref{Example_SDP}.

For all test instances, we first randomly generate a symmetric matrix $T \in \bb{R}^{n^2 \times n^2}$ with entries drawn from the standard normal distribution and then normalize it by $\norm{T}$. We define $\ca{H}:\bb{S}^{n\times n} \to \bb{S}^{n\times n}$ as $\ca{H}(X) = \Phi(\mathrm{reshape}(T\mathrm{vec}(X)))$ for any $X \in \bb{S}^{n\times n}$, where $\mathrm{vec}(X)$ is the column-wise vectorization of $X$, and $\mathrm{reshape}(\cdot)$ is its inverse (i.e., $\mathrm{reshape}(\mathrm{vec}(X)) = X$). This ensures $\ca{H}$ is self-adjoint over 
$\bb{S}^{n \times n}$. We set $\nu = 1.0$ and $\mu = 0.01$ in \eqref{Prob_FBE}  for all instances.  For all solvers, the tolerance is set to $10^{-5}$, and the maximum runtime is chosen as $300$ seconds. For each instance, we generate $B$ and $\hat{X}_0$ via i.i.d. normal sampling, and then project $\hat{X}_0$ onto the PSD cone to obtain the initial point $X_0$.

We compare our projected gradient descent method (PGD) in \eqref{Eq_FBE_PGD} with several efficient solvers. Treating \eqref{Example_SDP_FBE} as an equality-constrained problem, we use SLSQP and TRCON from SciPy. Alternatively, by viewing \eqref{Example_SDP_FBE} as an optimization problem over the spherical manifold, we apply the Riemannian gradient descent (RGD) method and Riemannian conjugate gradient (RCG) method from PyManopt. For our PGD, we use Barzilai-Borwein adaptive stepsizes and non-monotone line search.

The numerical results are shown in Table \ref{Table_SDP}. From these numerical results, we verify the equivalence between \eqref{Prob_Ori} and \eqref{Prob_FBE}. Moreover, these numerical results demonstrate that Riemannian solvers from PyManopt fail to achieve high accuracy, while PGD and TRCON yield solutions with high accuracy. Moreover, our proposed PGD method is significantly faster than TRCON, which further demonstrates the potential of our forward-backward envelope and the proposed projected gradient descent method in \eqref{Eq_FBE_PGD}.

    \begin{table}[tb]
		\begin{center}
			\footnotesize
			\begin{minipage}{\textwidth}
				\caption{Numerical experiments on solving \eqref{Example_SDP} through \eqref{Example_SDP_FBE}.}
				\label{Table_SDP}
				\begin{tabular*}{\textwidth}{c@{\extracolsep{\fill}}ccccccc@{\extracolsep{\fill}}}
					\toprule \midrule
					Test instance& Solver &
					Function value & Iterations & Function evaluations & Stationarity & Feasibility
					& CPU time (s) \\
                    \hline
                        \multirow{5}{*}{$n={10}$}
&  PGD   & -9.446e-01 & 94 & 97 & 5.435e-06 & 0.000e+00 & 0.218 \\
&  RGD   & -9.663e-01 & 65 & 65 & 1.207e-01 & 8.476e-02 & 0.308 \\
&  RCG   & -1.034e+00 & 43 & 43 & 1.438e-01 & 3.685e-01 & 0.284 \\
&  TRCON & -9.446e-01 & 86 & 96 & 1.525e-05 & 9.864e-11 & 0.483 \\
&  SLSQP & -9.688e-01 & 1000 & 10705 & 2.027e-01 & 7.723e-02 & 11.154 \\
                         \hline
                         \multirow{5}{*}{$n={20}$}
&  PGD   & -1.658e+00 & 94 & 97 & 8.917e-06 & 2.220e-16 & 0.231 \\
&  RGD   & -1.735e+00 & 76 & 76 & 1.790e-01 & 1.294e-01 & 0.325 \\
&  RCG   & -2.009e+00 & 41 & 41 & 2.307e-01 & 6.936e-01 & 0.243 \\
&  TRCON & -1.658e+00 & 76 & 86 & 4.922e-05 & 1.644e-12 & 0.728 \\
&  SLSQP & -1.720e+00 & 1000 & 10667 & 2.616e-01 & 9.679e-02 & 45.659 \\
                        \hline
                        \multirow{5}{*}{$n={30}$}
&  PGD   & -1.847e+00 & 84 & 86 & 4.833e-06 & 4.441e-16 & 0.351 \\
&  RGD   & -2.002e+00 & 71 & 71 & 1.785e-01 & 2.267e-01 & 0.501 \\
&  RCG   & -2.243e+00 & 41 & 41 & 2.562e-01 & 6.680e-01 & 0.419 \\
&  TRCON & -1.847e+00 & 65 & 68 & 8.923e-05 & 3.127e-11 & 1.299 \\
&  SLSQP & - & - & - & - & - & $>300$ \\
                        \hline
                        \multirow{5}{*}{$n={50}$}
&  PGD   & -2.323e+00 & 91 & 93 & 5.830e-06 & 2.220e-16 & 1.082 \\
&  RGD   & -2.581e+00 & 53 & 53 & 2.008e-01 & 2.905e-01 & 1.251 \\
&  RCG   & -2.979e+00 & 39 & 39 & 2.651e-01 & 8.676e-01 & 1.323 \\
&  TRCON & -2.323e+00 & 67 & 77 & 1.216e-04 & 9.163e-11 & 31.039 \\
&  SLSQP & - & - & - & - & - & $>300$ \\
                        \hline
					\bottomrule
				\end{tabular*}
			\end{minipage}
		\end{center}
	\end{table}

\subsection{Semi-positive definite cone with linear constraints}
In this subsection, we test the performance of our proposed envelope approach on the following optimization problem over the SPD cone with additional linear constraints, 
\begin{equation}
    \label{Example_SDP_2}
    \begin{aligned}
        \min_{X \in \bb{R}^{n \times n}} \quad &  \inner{B_0, X} + \frac{1}{2} \inner{X, \ca{H}(X)} + \frac{\nu}{6} \norm{X}\ff^3\\
        \text{s. t.} \quad &\ca{B}(X) = b, \quad  X \in \X.
    \end{aligned}
\end{equation}
Here $\X = \{X\in \bb{S}^{n\times n}: \norm{X}_2 \leq M, \; X \succeq 0\}$ with $M = 10^{6}$,  $B_0 \in \bb{S}^{n \times n}$ is a square matrix, $\ca{H}: \bb{S}^{n \times n} \to \bb{S}^{n \times n}$ is a self-adjoin linear mapping, 
$\ca{B}: \bb{S}^{n \times n} \to \Rm$ is a linear mapping,  and $\nu > 0$ is the regularization parameter for the cubic term. 
Then based on the forward-backward semi-envelope \eqref{Prob_FBE}, we reformulate the constrained optimization problem \eqref{Example_SDP_2} into the following equality-constrained optimization problem,
\begin{equation}
    \label{Example_SDP_2_FBE}
    \begin{aligned}
        \min_{X \in \bb{R}^{n \times n}} \quad &\psimu(X)\\
        \text{s. t.} \quad & \ca{B}(X) = b.
    \end{aligned}
\end{equation}
Here $\psimu$ is the forward-backward semi-envelope for \eqref{Example_SDP_2}. 
It is worth mentioning that  it is generally expensive to compute
the projection onto the 
feasible region of \eqref{Example_SDP_2} since it 
generally does not have a closed-form solution. 
On the other hand, \eqref{Example_SDP_2_FBE} is a nonlinear optimization over the affine set $\M=\{X\in \bb{R}^{n\times n}:\ca{B}(X) = b\}$, which can be efficiently solved by various existing solvers. 

For all the test instances in this subsection, we generated the matrix $B$ and the mapping $\ca{H}$ in the same technique as those in Section \ref{Subsection_Numerocal_experiment_1}, while $b \in \Rm$ is randomly generated by the ``randn'' function in the NumPy package. Moreover, for the linear mapping $\ca{B}$, we randomly generate 
matrices  $B_1,\ldots, B_m \in \bb{S}^{n \times n}$ with entries drawn from the standard normal distribution, and define the linear mapping $\ca{B}$ by $\ca{B}(X) = (\inner{B_i,X})_{i=1}^m$ for any $X \in \bb{S}^{n\times n}$. Additionally, we set $\nu = 1.0$ for all instances, and choose $\mu = 0.001$ in the forward-backward envelope \eqref{Prob_FBE}. 

In our numerical experiments, we compare the efficiency of our proposed projected gradient descent method (PGD) \eqref{Eq_FBE_PGD} with TRCON and SLSQP. It is worth mentioning that although the affine space is a smooth manifold, it is not supported in the PyManopt package. Therefore, the Riemannian optimization solvers, such as RGD and RCG, cannot be applied to solve \eqref{Example_SDP_2_FBE}. The results of our numerical experiments are exhibited in Table \ref{Table_SDP}. From these presented results, we can observe that both PGD and TRCON can successfully solve the optimization problem \eqref{Example_SDP_2_FBE}, while the SLSQP solver fails in each test instance. Moreover,  the PGD solver achieves superior performance over the TRCON solver. These numerical experiments further demonstrate the great potential of our proposed forward-backward envelope approach and  the projected gradient method \eqref{Eq_FBE_PGD}.

    \begin{table}[tb]
		\begin{center}
			\footnotesize
			\begin{minipage}{\textwidth}
				\caption{Numerical experiments on solving SDP problems through \eqref{Example_SDP_2}.}
				\label{Table_SDP_2}
				\begin{tabular*}{\textwidth}{c@{\extracolsep{\fill}}cccccccc@{\extracolsep{\fill}}}
					\toprule \midrule
					Test instance& Solver &
					Function value & Iterations & Function evaluations & Stationarity & Feasibility
					& CPU time (s) \\
                    \hline
                    \multirow{3}{*}{$n={10}$}
&  PGD   & 1.090e+03 & 97 & 98 & 3.604e-06 & 8.555e-13 & 0.205 \\
&  TRCON & 1.090e+03 & 204 & 208 & 1.289e-05 & 1.158e-12 & 0.893 \\
&  SLSQP & -2.791e+32 & 4 & 4 & 7.471e+17 & 4.274e+08 & 0.020 \\
                    \hline 
                    \multirow{3}{*}{$n={20}$}
&  PGD   & 3.330e+03 & 274 & 288 & 7.954e-06 & 4.433e-13 & 0.811 \\
&  TRCON & 3.330e+03 & 510 & 511 & 3.451e-05 & 1.592e-12 & 6.337 \\
&  SLSQP & -1.030e+47 & 7 & 18 & 4.538e+25 & 1.963e+14 & 0.639 \\
                    \hline 
                    \multirow{3}{*}{$n={30}$}
&  PGD   & 2.936e+04 & 173 & 178 & 7.645e-06 & 1.775e-12 & 3.350 \\
&  TRCON & 2.935e+04 & 349 & 349 & 8.346e-05 & 7.227e-11 & 19.249 \\
&  SLSQP & -8.502e+35 & 5 & 15 & 9.220e+19 & 1.187e+12 & 5.018 \\
                    \hline 
                    \multirow{3}{*}{$n={50}$}
&  PGD   & 8.555e+04 & 262 & 269 & 6.413e-06 & 5.687e-12 & 7.197 \\
&  TRCON & - & - & - & - & - & $>300$ \\
&  SLSQP & - & - & - & - & - & $>300$ \\
                    \hline 
					\bottomrule
				\end{tabular*}
			\end{minipage}
		\end{center}
	\end{table}

\section{Conclusion}

In this paper, we consider the constrained optimization problem of the form \eqref{Prob_Ori} subject to equality constraints $c(x) = 0$ and 
$x\in\X$.
The nonconvexity of $c(x)$ in the problem leads to the nonconvexity of its feasible region $\K$, hence presenting significant challenges in the development of envelope-based approaches for solving \eqref{Prob_Ori}.

To overcome these difficulties, inspired by the forward-backward envelope approaches for convex-constrained optimization problems,  we introduce the forward-backward semi-envelope approach for \eqref{Prob_Ori}. Specifically, we reformulate \eqref{Prob_Ori} as the minimization of a continuously differentiable objective function $\psimu$ with the equality constraints $c(x) = 0$. We establish the equivalence between \eqref{Prob_Ori} and \eqref{Prob_FBE} in the aspect of their first-order stationary points. 

Based on the established equivalence between \eqref{Prob_Ori} and \eqref{Prob_FBE}, a wide range of existing optimization techniques designed for equality-constrained problems, along with their associated theoretical guarantees, can be directly utilized to address \eqref{Prob_Ori}. Furthermore, motivated by the properties of the approximate gradient of $\psimu(x)$, we develop a projected gradient method specifically for solving \eqref{Prob_Ori}. We establish its convergence properties and prove the $\ca{O}(\varepsilon^{-2})$ worst-case iteration complexity.

Preliminary numerical experiments further substantiate the effectiveness and potential of our proposed methods in solving \eqref{Prob_Ori}. These results indicate that the semi-envelope framework offers a promising direction for future research in nonconvex constrained optimization.

\bibliographystyle{plain}
\bibliography{ref}

\end{document}